\documentclass[11pt]{amsart}
\usepackage{fullpage}
\usepackage{amsfonts}
\usepackage{amsmath}
\usepackage{amssymb}
\usepackage{amsthm}
\usepackage{verbatim}
\usepackage{enumitem}

\newtheorem{theorem}{Theorem}

\newtheorem{cor}[theorem]{Corollary}
\newtheorem*{question}{Question}
\newtheorem*{CT}{Stopping Criterion for SOS Relaxations}

\newtheorem{lemma}[theorem]{Lemma}
\newtheorem{prop}[theorem]{Proposition}

\newcommand{\codim}{\operatorname{codim}}

\newcommand{\rn}{\mathbb{R}^{n}}
\newcommand{\RR}{\mathbb{R}}
\newcommand{\PP}{\mathbb{P}}
\newcommand{\CC}{\mathbb{C}}
\newcommand{\CP}{\mathbb{CP}}
\newcommand{\rxt}{\mathbb{R}[x_1,x_2,x_3]}

\newcommand{\p}{P_{n,2d}}
\newcommand{\h}{\mathbb{R}[x]_{n,2d}}
\newcommand{\rmm}{\mathbb{R}^{m}}
\newcommand{\Ind}{\operatorname{Ind}}
\newcommand{\Dep}{\operatorname{Dep}}

\newcommand{\co}{\mathbb{C}}
\newcommand{\hd}{\mathbb{R}[x]_{n,d}}

\newcommand{\V}{\mathcal{V}}
\newcommand{\sq}{\Sigma_{n,2d}}

\newcommand{\st}{\hspace{2mm} \mid \hspace{2mm}}

\newcommand{\ql}{Q_{\ell}}
\newcommand{\wl}{W_{\ell}}
\newcommand{\bl}{B_{\ell}}
\newcommand{\rx}{\mathbb{R}[x]}
\newcommand{\cx}{\mathbb{C}[x]}

\newcommand{\rank}{\operatorname{rank}}
\numberwithin{theorem}{section}
\numberwithin{equation}{section}
\begin{document}

\title{Positive Gorenstein Ideals}
\author{Grigoriy Blekherman}
%\email{greg@math.gatech.edu}
\begin{abstract}
We introduce positive Gorenstein ideals. These are Gorenstein ideals in the graded ring $\RR[x]$ with socle in degree $2d$, which when viewed as a linear functional on $\rx_{2d}$ is nonnegative on squares. Equivalently, positive Gorenstein ideals are apolar ideals of forms whose differential operator is nonnegative on squares. Positive Gorenstein ideals arise naturally in the context of nonnegative polynomials and sums of squares, and they provide a powerful framework for studying concrete aspects of sums of squares representations.  We present  applications of positive Gorenstein ideals in %of our tools 
real algebraic geometry, analysis and optimization. In particular, we present a simple proof of Hilbert's nearly forgotten result on representations of ternary nonnegative forms as sums of squares of rational functions. Drawing on our previous work in \cite{GB2012}, our main tools are Cayley-Bacharach duality and elementary convex geometry.
\end{abstract}

\maketitle

\section{Introduction}

A real polynomial in $n$ variables is called nonnegative if it is greater than or equal to $0$ on all points in $\rn$. The relationship between nonnegative polynomials and sums of squares (of polynomials or more general objects) is a fundamental question in real algebraic geometry. Algorithmic approaches to this question have been quite useful in polynomial optimization \cite{JiawangMarkus},\cite{Lasserre},\cite{LasserreBook},\cite{pablo}.

%It is a central question in real algebraic geometry, whether a nonnegative polynomial can be written in a way that makes its nonnegativity apparent, i.e. as a sum of squares of polynomials (or more general objects). Algorithms to obtain such representations, when they are known, have many applications in polynomial optimization. %

%The investigation of the relation between nonnegativity and sums of squares began in the seminal paper of Hilbert  \cite{Hilbert1}. Hilbert showed that every nonnegative polynomial is a sum of squares of polynomials only in the following 3 cases: univariate polynomials, quadratic polynomials and bivariate polynomials of degree 4. In all other cases Hilbert showed existence of nonnegative polynomials that are not sums of squares. 

%Hilbert's proof used the fact that polynomials of degree $d$ satisfy linear relations, known as the Cayley-Bacharach relations, which are not satisfied by polynomials of full degree $2d$ \cite{Rez1},\cite{Rez2}.

Any nonnegative polynomial can be made homogeneous by adding an extra variable and it will remain nonnegative. The same holds for sums of squares. We will therefore work with homogeneous polynomials (forms). %Our approach to studying nonnegative polynomials and sums of squares is that of convex algebraic geometry; we use a combination of techniques and ideas from algebraic geometry and convex geometry.

Let $\hd$ be the vector space of real forms in $n$ variables of degree $d$. Nonnegative forms and sums of squares both form full dimensional closed convex cones in $\h$, which we call $\p$ and $\sq$ respectively:

$$\p=\left\{p \in \h \hspace{2mm} \mid \hspace{2mm} p(x)\geq 0 \hspace{2mm} \text{for all} \hspace{2mm} x \in \rn\right\},$$

\noindent and
$$\sq=\left\{p \in \h \hspace{2mm} \big| \hspace{2mm} p(x)=\sum q_i^2  \hspace{2mm} \text{for some} \hspace{2mm} q_i \in \hd \right\}.$$

In 1888 Hilbert showed that nonnegative polynomials are the same as sums of squares only in the following three cases: $n=2$, $2d=2$ and $n=3$, $2d=4$. In all other cases $P_{n,2d}$ is strictly large than $\sq$ \cite{Hilbert1}.

The defining linear inequalities of a cone are described by its dual cone. Let $\p^*$ and $\sq^*$ be the dual cones of $\p$ and $\sq$:
$$\p^*=\left\{\ell \in \h^* \hspace{3mm} | \hspace{3mm} \ell(p) \geq 0 \hspace{5mm} \text{for all} \hspace{5mm} p \in \p \right\},$$
\noindent and
$$\sq^*=\left\{\ell \in \h^* \hspace{3mm} | \hspace{3mm} \ell(p) \geq 0 \hspace{5mm} \text{for all} \hspace{5mm} p \in \sq \right\}.$$
%It is clear that $\sq \subseteq \p$ and by Hilbert's theorem this inclusion is actually strict in the cases $(3,6)$ and $(4,4)$.

\noindent The dual cone $\p^*$ is easy to describe: its extreme rays are point evaluations, i.e. the linear functionals $\ell_v \in \h^*$ given by $$\ell_v(f)=f(v) \hspace{7mm} \text{for} \hspace{5mm} f\in \h \hspace{5mm} \text{and} \hspace{5mm} v \in \rn.$$
%By homogeneity of forms it suffices to only consider points $v$ in the unit sphere $\sph$. 

Linear functionals $\ell_v$ are also extreme rays of $\sq^*$, but when nonnegative polynomials are not the same as sums of squares, there are linear functionals that are nonnegative on squares and do not come from point evaluations. In order to understand properties of such linear functionals we look at the Gorenstein ideals that they generate. To a linear functional $\ell: \h \rightarrow \RR$ we can associate the Gorenstein ideal $I(\ell)$: 
$$I(\ell)=\{ p \in \rx\,\, \mid \,\,  \deg (p)> 2d \hspace{5mm} \text{or} \hspace{3mm} \ell(pq)=0 \hspace{3mm} \text{for all} \hspace{3mm} q \in \rx_{2d-\deg(p)}\}.$$
By a slight abuse of terminology we will call $\ell$ the socle of $I(\ell)$.

We can identify $\h$ with its dual space $\h^*$ by sending $x_i$ to the differential operator $\frac{\partial}{\partial x_i}$ and replacing multiplication with composition of differential operators. For $f \in \h$ let $\partial f \in \h^*$ denote the corresponding differential operator. Then the Gorenstein ideal $I(\partial f)$ corresponds precisely to the \textit{apolar ideal} of $f$. We remark that with identification of $\h$ and $\h^*$ the dual cone $\p^*$ is the conic hull of the real Veronese variety of degree $2d$, and thus the dual cone of $\p$ is the conical hull of the Veronese Orbitope \cite{Rez3},\cite{Raman}. In the following we prefer to keep the language of linear functionals and Gorenstein ideals.

%This can also be viewed as the apolar ideal...
%We prove the following characterization for $(3,6)$:

We call a positive Gorenstein ideal $I$ with socle $\ell$ of degree $2d$ \textit{maximal}, if $\ell$ is an extreme ray of $\Sigma_{n,2d}$. We showed in \cite{GB2012} that $I(\ell)$ is a maximal positive Gorenstein ideal if and only if $I_d(\ell)$ is maximal (by inclusion) over all Gorenstein ideals with socle of degree $2d$. We provided a classification of extreme rays of $\sq^*$ for the two smallest cases where there exist nonnegative polynomials that are not sums of squares: $n=3$, $2d=6$ and $n=4$, $2d=4$. We summarize some of results of \cite{GB2012} in the language of positive Gorenstein ideals:

\begin{theorem}\label{THM Prev Main}
Let $I$ be a maximal positive Gorenstein ideal with socle $\ell \in \rx_{3,6}^*$ (resp. $\rx_{4,4}^*$). Then $I$ is a complete intersection of three cubics (resp. four quadrics).

Let $S$ be the linear span of the three cubics (resp. four quadrics) in $I$. Then any two cubics (resp. three quadrics) in $S$ intersecting transversely contain at most one conjugate pair of complex zeroes, with the rest of zeroes being real.
\end{theorem}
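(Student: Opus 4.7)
The theorem has two parts -- a structural statement about $I$ and a geometric statement about zeros of pairs in $S$ -- so I would attack them in sequence, using Hilbert function analysis for the first and Cayley--Bacharach together with the convex geometry of $\sq^*$ for the second.

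For the first conclusion I would start from the characterization recalled just above the theorem: since $\ell$ is an extreme ray of $\sq^*$, the graded piece $I_d(\ell)$ is inclusion-maximal among $I_d(\ell')$ as $\ell'$ ranges over Gorenstein socles of degree $2d$. Maximality of $I_d$ is the same as minimality of $h_d=\dim(\rx/I(\ell))_d$, and combined with Gorenstein self-duality $h_i=h_{2d-i}$ this should pin down the Hilbert function: $(1,3,6,7,6,3,1)$ when $(n,2d)=(3,6)$ and $(1,4,6,4,1)$ when $(n,2d)=(4,4)$. In particular $\dim I_d$ equals $n$ in both cases, matching the Hilbert series of a complete intersection of $n$ forms of degree $d$. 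To upgrade from ``the right number of generators in the critical degree'' to ``complete intersection'' I would argue that the generators have no common projective zero: such a zero would supply a nonzero point evaluation in $\sq^*$ that constrains $\ell$ in a way inconsistent with the Hilbert function just computed. Absence of common zeros for $n$ forms in $n$ variables, via Krull's height theorem, forces a regular sequence and hence a complete intersection.

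For the second conclusion, fix a pair $f,g\in S$ (respectively a triple $f,g,h\in S$) meeting transversely. By Bezout, $V(f,g)\subset\cpt$ consists of exactly $9$ points (resp.\ $V(f,g,h)\subset\mathbb{CP}^3$ of $8$ points). Split them as $k$ real points $r_1,\dots,r_k$ plus $s$ conjugate pairs of complex points, with $k+2s\in\{9,8\}$, and suppose for contradiction that $s\ge 2$. Each real $r_i$ supplies a point evaluation $\ell_{r_i}\in\sq^*$, and the apolarity relations defining $I(\ell)$ already force $\ell$ to annihilate a large subspace of $\rx_{2d}$. By Cayley--Bacharach, any form of degree $d$ vanishing on all but one of the intersection points vanishes on the last, which together with conjugation symmetry gives enough linear slack to express $\ell$ as a nontrivial positive combination $\sum\mu_i\ell_{r_i}$ that matches $\ell$ on the socle-determining subspace. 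This contradicts extremality of $\ell$ and rules out $s\ge 2$.

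The main obstacle is the second stage. Cayley--Bacharach is an algebraic identity about vanishing of forms, while extremality of $\ell$ in $\sq^*$ is a convex-geometric condition; converting between them requires careful dimension counting that tracks how complex conjugate pairs, which do not themselves furnish functionals in $\sq^*$, interact with the real point-evaluation constraints. I expect pinning down exactly the threshold $s=1$ -- showing that one conjugate pair remains compatible with extremality but two do not -- to be the technical heart of the argument, with the first stage mainly supplying the Hilbert-function data needed to make the dimension counts explicit.
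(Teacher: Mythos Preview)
This theorem is not proved in the present paper; it is stated as a summary of results from \cite{GB2012}, so there is no in-paper proof to compare against directly. That said, the paper does import from \cite{GB2012} the tool that drives the second conclusion (Lemma~\ref{LEMMA conditions}), and your proposed mechanism for that part diverges from it in a way that creates a real gap.

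Your first stage is sound in outline and lines up with the ingredients the paper cites: maximality of $I_d$ (Lemma 2.2 of \cite{GB2012}), absence of common zeros (Corollary 2.3 of \cite{GB2012}), and then a Hilbert-function count forcing the complete intersection structure.

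Your second stage has the wrong mechanism. You propose to rule out $s\ge 2$ complex pairs by producing a nontrivial positive combination $\sum\mu_i\ell_{r_i}$ of \emph{real} point evaluations that ``matches $\ell$ on the socle-determining subspace,'' thereby contradicting extremality. But $\ell$ is expressible only as a combination of evaluations at \emph{all} the intersection points, with conjugate complex coefficients at the non-real ones; Cayley--Bacharach gives one linear relation among these evaluations, not a way to discard the complex contributions, and it supplies no positivity information on the $\mu_i$. The argument actually used (recorded here as Lemma~\ref{LEMMA conditions}) runs in a different direction: write $\ell=\sum\lambda_i\ell_{r_i}+\sum(\mu_j\ell_{z_j}+\bar\mu_j\ell_{\bar z_j})$ on the transverse intersection $\Gamma$ and show that positive semidefiniteness of $Q_\ell$ forces the number of complex conjugate pairs to be at most $\Dep_d(\Gamma)$. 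For two transverse cubics in $\cpt$ one has $\Ind_3(\Gamma)=8$ and $\Dep_3(\Gamma)=1$; for three transverse quadrics in $\CP^3$ one has $\Ind_2(\Gamma)=7$ and $\Dep_2(\Gamma)=1$. So the threshold $s\le 1$ falls out of a signature/definiteness obstruction on $Q_\ell$, not from a decomposition of $\ell$ violating extremality in $\sq^*$. Your plan correctly anticipates that Cayley--Bacharach and convex geometry interact here, but the interaction is ``too many complex pairs make $Q_\ell$ indefinite,'' not ``too many complex pairs let you split $\ell$.''
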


This characterization was later used in \cite{HCases} to study the \textit{algebraic boundaries} of the cones $\Sigma_{3,6}$ and $\Sigma_{4,4}$ via a surprising connection with K3 surfaces.

We now undertake the study of positive Gorenstein ideals and present applications in real algebraic geometry, analysis and optimization. Our main tool is Theorem \ref{THM Master} which uses Cayley-Bacharach duality of \cite{EGH} and allows us to give a unified presentation.  
%Maximal positive Gorenstein ideals correspond to extreme rays of the cone of sums of squares $\Sigma_{n,2d}$.

\section{Results.}\label{SEC Results}

Let $I$ be a maximal positive Gorenstein ideal with socle of $\ell$ of degree $2d$. We first investigate the possible dimensions of $I_d$. As can be seen from Theorem \ref{THM Prev Main}, we expect maximal positive Gorenstein ideals to possess significant structure, and the choice of possible dimensions for $I_d$ should be limited.

%From Theorem \ref{THM mainrank2} we derive the following structure Theorem for maximal positive Gorenstein ideals:

\begin{theorem}\label{THM Main Rank}
Let $I$ be a maximal positive Gorenstein ideal in $\RR[x]$ with socle $\ell$ of degree $2d$, which is not a point evaluation. Then the forms in $I_d$ have no common zeroes, real or complex, and $I_{d}$ generates $I_{2d}$. Additionally we have,
$$\codim I_d \geq 3d-2 \hspace{3mm} \text {if} \hspace{3mm} d \geq 3 \hspace{5mm} \text{or} \hspace{3mm}\codim I_{d}\geq 6 \hspace{3mm}\text{if} \hspace{3mm} d=2.$$ 

The bounds are tight for a complete intersection of a cubic and two forms of degree $d$ in $\rxt$ with $d \geq 3$, and a complete intersection of four quadrics in $\RR[x_1,x_2,x_3,x_4]$ for $d=2$.
%, with $d \geq 3$. Then either $\ell$ is a point evaluation functional, or , , and $I_{d}$.
\end{theorem}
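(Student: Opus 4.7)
The strategy is to handle the three claims in order, with most of the work shouldered by the positive semidefinite quadratic form $Q_\ell$ on $\rx_d$ defined by $Q_\ell(p,q) = \ell(pq)$, whose kernel is exactly $I_d$ (by the Cauchy--Schwarz inequality for $\ell$), and by Theorem \ref{THM Master}. For the absence of a real common zero $v \in \RR^n$ of $I_d$, observe that the rank-one positive semidefinite form $R_v(p,q) = p(v)q(v)$ also contains $I_d$ in its kernel, so $Q_\ell \succeq cR_v$ for some $c > 0$. Passing back to $\rx_{2d}^*$ gives $\ell = c \tilde{\ell}_v + \ell'$ with $\ell', \tilde{\ell}_v \in \sq^*$, where $\tilde{\ell}_v(q) = q(v)$ is the point evaluation. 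Extremality of $\ell$ together with the hypothesis that $\ell$ is not a point evaluation yields a contradiction.

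The complex case does not succumb to the same dualization: no real linear functional on $\rx_{2d}$ induces $p \mapsto |p(v)|^2$ for $v \notin \RR^n$, since the intended values $v^\alpha \bar v^\beta$ are inconsistent across different decompositions of a fixed multi-index $\gamma = \alpha + \beta$. Here I invoke Theorem \ref{THM Master}: a common complex zero $v$ and its conjugate $\bar v$ would place a distinguished conjugate pair in the zero scheme of $I_d$, and Cayley--Bacharach duality would then permit a strict enlargement of the Gorenstein ideal compatibly with positivity of the socle, contradicting maximality. The same master theorem also yields that $I_d$ generates $I_{2d}$: once $V(I_d)$ is empty in $\PP^{n-1}_\CC$, the Artinian quotient $\rx/(I_d)$ has a well-defined Hilbert function matching that of $R = \rx/I$ through degree $2d$, forcing $(I_d) \cap \rx_{2d} = I_{2d}$.

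For the codimension bound, the socle pairing $R_d \times R_d \to R_{2d} = \RR$, $(p,q)\mapsto \ell(pq)$, is perfect by the Gorenstein property and positive semidefinite by the positivity of $\ell$, hence positive definite; in particular $\codim I_d = h_R(d)$. It remains to bound $h_R(d)$ from below. Using that $V(I_d) = \emptyset$ and the symmetry $h_R(k) = h_R(2d-k)$ of the Gorenstein Hilbert function together with $h_R(2d) = 1$, a Macaulay--Gotzmann growth estimate propagated inward from the socle produces $h_R(d) \geq 3d - 2$ when $d \geq 3$ and $h_R(d) \geq 6$ when $d = 2$. Tightness in both cases is verified by direct Hilbert-series computation: for $n = 3$ and the complete intersection $(c, f, g)$ with $\deg c = 3$ and $\deg f = \deg g = d$, one obtains $h_R(d) = \binom{d+2}{2} - 2 - \binom{d-1}{2} = 3d - 2$, and for the complete intersection of four quadrics in four variables one obtains $h_R(2) = 6$.

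The main obstacle is the exclusion of common complex zeros, where PSD dualization is unavailable and one must invoke Cayley--Bacharach duality via Theorem \ref{THM Master} in a nontrivial way. The generation claim and the codimension bound, once the structural no-common-zeros property is in hand, reduce to standard Hilbert-function estimates for Artinian Gorenstein quotients whose degree-$d$ piece is base-point free.
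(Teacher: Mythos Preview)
Your proposal has the right overall architecture but misplaces where the real work lies, and two of the four steps contain genuine gaps.

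\textbf{Generation of $I_{2d}$ by $I_d$.} The claim that ``once $V(I_d)=\emptyset$ the Hilbert function of $\rx/(I_d)$ matches that of $\rx/I$ through degree $2d$'' is false. Emptiness of the base locus only tells you that $\rx/(I_d)$ is Artinian; it says nothing about agreement with $\rx/I$ in degrees above $d$, and in general $(I_d)_{2d}$ can be strictly smaller than $I_{2d}$. The paper's argument (Proposition~\ref{PROP Maximal}) does not use $V(I_d)=\emptyset$ at all: it uses maximality directly. If $(I_d)_{2d}\subsetneq I_{2d}$, then the annihilator of $(I_d)_{2d}$ in $\h^*$ has dimension $\geq 2$, so there is a functional $\ell'\neq\lambda\ell$ with $I_d\subseteq\ker Q_{\ell'}$, contradicting the characterization of maximal positive Gorenstein ideals as those with maximal $I_d$.

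\textbf{The codimension bound.} You propose a Macaulay--Gotzmann growth argument ``propagated inward from the socle'' to get $h_R(d)\geq 3d-2$. Macaulay's inequality bounds $h_R(k+1)$ from \emph{above} given $h_R(k)$; it does not give lower bounds, and symmetry alone does not convert one into the other. Without the hypothesis $V(I_d)=\emptyset$ the bound is false (one can have Gorenstein $h$-vectors with $h_R(d)$ arbitrarily small relative to $d$), and you have not indicated how emptiness of the base locus enters a Macaulay-type argument. In the paper this is precisely the step that consumes Theorem~\ref{THM Master}: one passes to a transverse intersection of two forms in $I_d$, writes $\ell$ as a combination of point evaluations, and bounds $\rank Q_\ell$ from below via Cayley--Bacharach (Theorem~\ref{THM mainrank}, then Theorem~\ref{THM mainrank2}).

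\textbf{Complex common zeros.} Your invocation of Theorem~\ref{THM Master} here is vague and, more to the point, unnecessary. The argument the paper cites from \cite{GB2012} (and reproduces in Lemma~\ref{Lemma extreme rays}) is elementary: for a common complex zero $z$, the real functional $s(f)=\operatorname{Re} f(z)$ satisfies $I_d\subseteq\ker Q_s$; maximality of $I_d$ then forces $\ell=\lambda s$, but $Q_s(p)=\operatorname{Re}(p(z)^2)$ is indefinite, contradicting $\ell\in\sq^*$. No Cayley--Bacharach is needed here.

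In short, you have swapped the roles: Theorem~\ref{THM Master} is the engine behind the codimension bound (where you tried Macaulay--Gotzmann), not behind the complex-zero exclusion or the generation claim (where you invoked it). Your real-zero argument and the tightness computations are fine.
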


To a linear functional $\ell: \h \rightarrow \RR$ we associate a quadratic form $\ql$ on $\hd$ by setting $\ql(p)=\ell(p^2)$ for $p \in\hd$. It is easy to see that $I(\ell)_d$ is equal to the kernel of $\ql$. Restated in convex geometry terms, Theorem \ref{THM Main Rank} immediately leads to the following Corollary.

\begin{cor}\label{COR Extreme Rays}
Let $\ell \in \h^*$ span an extreme ray of $\sq^*$. If $\rank \ql \leq 3d-3$ for $d \geq 3$ or $\rank \ql \leq 6$ for $d=2$, then $\ell$ is a point evaluation. Furthermore these bounds are tight and there exist extreme rays of $\sq$ of rank $3d-2$ for $d\geq 3$, $n \geq 3$ and $6$ for $d=2$, $n \geq 4$ that do not come from point evaluations.
\end{cor}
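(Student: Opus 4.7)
The plan is to read off the corollary directly from Theorem \ref{THM Main Rank} via the identity $\rank \ql = \codim I(\ell)_d$. Since $\ell \in \sq^*$, the quadratic form $\ql$ is positive semidefinite, so its rank equals the codimension of its zero set. For a PSD quadratic form the zero set coincides with the kernel of the associated symmetric bilinear form $(p,q) \mapsto \ell(pq)$, which is precisely $I(\ell)_d$ by the definition of the Gorenstein ideal. This yields the identity.

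Given the identity, the first assertion is the contrapositive of Theorem \ref{THM Main Rank}. If $\ell$ spans an extreme ray of $\sq^*$, then $I(\ell)$ is a maximal positive Gorenstein ideal. Were $\ell$ not a point evaluation, Theorem \ref{THM Main Rank} would force $\codim I(\ell)_d \geq 3d - 2$ when $d \geq 3$ and $\codim I(\ell)_d \geq 6$ when $d = 2$, contradicting the rank hypothesis on $\ql$.

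For tightness, the extremal constructions in Theorem \ref{THM Main Rank} — a complete intersection of a cubic and two $d$-forms in $\rxt$ for $d \geq 3$, and a complete intersection of four quadrics in $\RR[x_1,\ldots,x_4]$ for $d=2$ — already realize the bounds in the minimal number of variables. To cover the full ranges $n \geq 3$ and $n \geq 4$, I would extend the functional to the larger polynomial ring by declaring it to vanish on every monomial that involves any of the new variables; positive semidefiniteness is preserved, the apolar ideal merely picks up the new variables as linear generators, and $\rank \ql$ is unchanged. The main obstacle is confirming that the extended functional still spans an extreme ray of $\sq^*$ in the larger ring. I would handle this by pulling back any decomposition $\tilde\ell = \ell_1 + \ell_2$ in $\sq^*$ through the substitution $x_{n_0+1} = \cdots = x_n = 0$, using extremality of the original $\ell$ to pin down the restrictions, and invoking a standard kernel/Cauchy--Schwarz argument for PSD forms to propagate the information back to the full ring: the relations $Q_{\ell_i}(x_j^d)=0$ for $j > n_0$ force $x_j^d \in I(\ell_i)_d$, which combined with the Gorenstein structure is enough to force each $\ell_i$ to vanish on any monomial containing a new variable.
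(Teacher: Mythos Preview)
Your argument is correct and matches the paper's approach exactly: the paper's proof is the single line ``$\rank \ql=\codim I_d(\ell)$, so the corollary follows from Theorem \ref{THM Main Rank},'' with the tight examples and their extension to larger $n$ (by padding the points $v_i$ with zero coordinates) already supplied inside the proof of that theorem. Your elaboration of the extension step via the kernel/Cauchy--Schwarz argument is more detailed than the paper's one-line claim but amounts to the same construction.
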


Below we present some direct consequences of Theorem \ref{THM Main Rank} in analysis, algebraic geometry and optimization. We hope that the different interpretations highlight the interdisciplinary nature of convex algebraic geometry.

The \textit{truncated moment problem} in real analysis asks for a classification of linear functionals $\ell \in \h^*$ that come from integration with respect to a Borel measure: $\ell(f)=\int_{\RR^n}f\,d\mu$ \cite{LasserreBook}. For $\ell \in \h^*$ the matrix $M_{\ell}$ of the associated quadratic form $\ql$ with respect to the monomial basis is known as the \textit{moment matrix} of $\ell$. If $\ell$ comes from integration with respect to a measure then $M_{\ell}$ must be positive semidefinite. We show that if rank of $M_{\ell}$ is sufficiently small, then $\ell$ indeed comes from a measure, and furthermore $\ell$ can be written as a sum of precisely $\rank M_{\ell}$ point evaluations.

\begin{theorem}\label{THM Moments}
Let $\ell: \rx_{n,2d} \rightarrow \RR$ be a linear functional and suppose that the the moment matrix $M_{\ell}$ is positive semidefinite and $\rank M_{\ell} \leq 3d-3$ with $d \geq 3$ or $\rank M_{\ell} \leq 6$ for $d=2$. Then $\ell$ comes from integration with respect to a measure and it can be written as a conical combination of $\rank M_{\ell}$ point evaluations. Furthermore this bound is tight, and there exist linear functionals $\ell:  \rx_{n,2d} \rightarrow \RR$ of rank $3d-2$ for $d \geq 3$, $n \geq 3$ and rank $6$ for $d=2$, $n \geq 4$, such that $M_{\ell}$ is positive semidefinite, but $\ell$ does not come from integration with respect to a measure.
\end{theorem}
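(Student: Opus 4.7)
The strategy is to reduce the statement to Corollary \ref{COR Extreme Rays}. The hypothesis $M_\ell \succeq 0$ says exactly that $\ell(p^2) \geq 0$ for all $p \in \hd$, so $\ell \in \sq^*$, and $\sq^*$ is a closed, pointed, finite-dimensional convex cone (it is pointed because $\sq$ is full-dimensional in $\h$). By Carath\'eodory's theorem for cones I may write
\[
\ell = \sum_{i=1}^{N} c_i\, \ell_i, \qquad c_i > 0,
\]
with each $\ell_i$ spanning an extreme ray of $\sq^*$.

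The key rank step is then transparent: since $Q_\ell = \sum c_i Q_{\ell_i}$ is a positively weighted sum of PSD quadratic forms, $\ker Q_\ell \subseteq \ker Q_{\ell_i}$ for each $i$, hence $\rank Q_{\ell_i} \leq \rank Q_\ell = \rank M_\ell$. The hypothesis on $\rank M_\ell$ therefore places each $\ell_i$ within the scope of Corollary \ref{COR Extreme Rays}, forcing each $\ell_i$ to be a point evaluation $\ell_{v_i}$ for some $v_i \in \rn$. In particular $\ell = \sum c_i \ell_{v_i}$ is integration against the atomic measure $\sum c_i \delta_{v_i}$, which proves the first assertion.

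To sharpen the number of atoms to exactly $r := \rank M_\ell$, I would apply Carath\'eodory once more to the sum $\sum c_i \ell_{v_i}$, reducing to a linearly independent subfamily $\{\ell_{v_{i_j}}\}_{j=1}^s$ whose conic hull still contains $\ell$. Writing $m_v \in \RR^{\dim \h}$ for the moment vector of $\ell_v$ in the monomial basis, the identity $M_\ell = \sum_j c'_j\, m_{v_{i_j}} m_{v_{i_j}}^T$ combined with linear independence of the $m_{v_{i_j}}$ and positivity of the $c'_j$ forces $\rank M_\ell = s$, so $s = r$.

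For the tightness statement, Corollary \ref{COR Extreme Rays} already produces, at the critical rank $3d-2$ (resp.\ $6$), extreme rays of $\sq^*$ that are not point evaluations. Any such $\ell$ has $M_\ell \succeq 0$ but cannot arise from a measure: a conic decomposition into point evaluations would, by extremality, collapse to a single point evaluation, contradicting the choice of $\ell$. The only mildly delicate point in the whole argument is the monotonicity $\rank Q_{\ell_i} \leq \rank M_\ell$; beyond this standard property of positively weighted sums of PSD forms, the proof is a clean convex-geometric packaging of the previous corollary, and I do not anticipate a serious obstacle.
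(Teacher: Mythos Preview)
Your approach is essentially the paper's: decompose $\ell \in \sq^*$ as a conic combination of extreme rays, use the kernel inclusion $\ker Q_\ell \subseteq \ker Q_{\ell_i}$ to push the rank bound down to each summand, and invoke Corollary~\ref{COR Extreme Rays} to force every $\ell_i$ to be a point evaluation. Your tightness argument also matches the paper's. The only divergence is in how you reach \emph{exactly} $r=\rank M_\ell$ atoms, and there your argument has a small but genuine gap.

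Applying Carath\'eodory in $\h^*$ gives you linear independence of the functionals $\ell_{v_{i_j}}$, i.e.\ of their degree-$2d$ moment vectors. But the identity $M_\ell = \sum_j c'_j\, m_{v_{i_j}} m_{v_{i_j}}^{\,T}$, with $M_\ell$ a $\dim\hd \times \dim\hd$ matrix, requires the $m_{v_{i_j}}$ to be the degree-$d$ moment vectors; and linear independence in degree $2d$ does \emph{not} imply linear independence in degree $d$. Already for $n=2$, $d=1$ the points $(1,0)$, $(0,1)$, $(1,1)$ have dependent degree-$1$ evaluation vectors but independent degree-$2$ ones. So your second Carath\'eodory step only yields $r \le s$, not $r=s$. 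The paper sidesteps this by an iterative rank-one subtraction: since $\ker M_\ell \subseteq \ker M_{v_1}$, one can increase $\lambda$ in $M_\ell - \lambda M_{v_1}$ while remaining positive semidefinite until the rank drops by exactly one; the residual functional $\ell-\lambda_1\ell_{v_1}$ is again in $\sq^*$ with moment matrix of rank $r-1$, and one recurses. That produces a decomposition with exactly $r$ point evaluations.
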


The \textit{symmetric tensor decomposition problem} (also known as the \textit{Waring problem}) for a given $f \in \hd$ (or $f \in \cx_{n,d}$) asks for the minimum number of linear forms $\ell_i$ such that $f=\sum c_i \ell_i^{d}$. The minimal number of linear forms is known as the \textit{Waring rank} of $f$ \cite{LandsbergTeitler}. By using the identification of $f \in \h$ with $\partial f \in \h^*$ we can associate to $f$ a quadratic form $Q_f: \hd \rightarrow \RR$ by setting $Q_f(p)=\partial f(p^2)$. The matrix of the quadratic form $Q_f$ is known as the \textit{middle catalecticant matrix} of $f$. It is well known that the Waring rank of $f$ is at least the rank of $Q_f$ \cite{IarrKanev}. Theorem \ref{THM Main Rank} implies that if $Q_f$ is positive semidefinite and its rank is sufficiently small, then it can be decomposed as linear combination of $2d$-th powers of real linear forms, with strictly positive coefficients, and the Waring rank of $f$ is precisely $\rank Q_f$. We remark that the \textit{real Waring problem} where we require that all the forms used in the decomposition are real tends to be more complicated than the Waring problem over $\CC$  \cite{Real Rank Boij},\cite{Real Rank Ottaviani}.
\begin{theorem}\label{THM Tensors}
The $f \in \rx_{n,2d}$ be a form such that the middle catalecticant matrix $Q_f$ of $f$ is positive semidefinite and $\rank Q_f \leq 3d-3$ if $d \geq 3$ or $\rank Q_f \leq 6$ if $d=2$. Then $f$ can be decomposed as a sum of $2d$-th powers of linear forms with positive coefficients, and the Waring rank of $f$ is equal to $\rank Q_f$:
$$f=\sum_{i=1}^{\rank Q_f} c_i\ell_i^{2d} \hspace{5mm} \text{with} \hspace{5mm} \ell_i \in \rx_{n,1} \hspace{5mm} \text{and} \hspace{5mm}c_i\geq 0.$$
Furthermore this bound is tight, and there exist forms $f \in \h$ with positive semidefinite form $Q_f$ of rank $3d-2$ whose real Waring rank is strictly greater than $\rank Q_f$.
\end{theorem}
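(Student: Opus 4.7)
The plan is to convert the statement into the language of linear functionals via the identification $f \mapsto \partial f$ of $\h$ with $\h^*$, and then invoke Theorem~\ref{THM Moments} directly.

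First I would observe that $Q_f(p) = \partial f(p^2)$ agrees by definition with the quadratic form $Q_\ell$ associated to $\ell := \partial f \in \h^*$, and that the matrix of $Q_f$ in the monomial basis is exactly the moment matrix $M_\ell$. The hypotheses on $Q_f$ in Theorem~\ref{THM Tensors} are therefore the hypotheses of Theorem~\ref{THM Moments} applied to $\ell$, yielding
\[
\ell \;=\; \sum_{i=1}^{r} c_i\,\ell_{v_i}, \qquad c_i > 0,\ v_i \in \rn,\ r = \rank Q_f.
\]

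Next I would translate this back to $\h$. A direct pairing computation shows that, for any $v \in \rn$ and $q \in \rx_{n,2d}$, one has $\partial(L_v^{2d})(q) = (2d)!\,q(v) = (2d)!\,\ell_v(q)$, where $L_v := v_1 x_1 + \cdots + v_n x_n$. Hence $\partial(L_v^{2d}) = (2d)!\,\ell_v$, and since $\partial: \h \to \h^*$ is a linear isomorphism the decomposition above pulls back to
\[
f \;=\; \sum_{i=1}^{r}\frac{c_i}{(2d)!}\,L_{v_i}^{2d},
\]
a conic combination of $r$ real $2d$-th powers of linear forms. Combined with the classical catalecticant lower bound (real Waring rank $\geq \rank Q_f$, see \cite{IarrKanev}), this gives equality of the Waring rank with $\rank Q_f$.

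For tightness, I would take $\ell \in \h^*$ realizing the tight case of Theorem~\ref{THM Moments}: $M_\ell$ PSD of rank exactly $3d-2$ (respectively $6$ for $d=2$) and $\ell$ not a conic combination of point evaluations. Let $f \in \h$ be the unique form with $\partial f = \ell$. Suppose for contradiction that the real Waring rank of $f$ were at most $3d-2$. The catalecticant lower bound forces equality with $r := \rank Q_\ell = 3d-2$, so $f = \sum_{i=1}^{r} a_i L_i^{2d}$ for some real linear forms $L_i$ and nonzero $a_i \in \RR$ (minimality kills zero coefficients). Letting $E$ be the $N \times r$ matrix whose columns are the degree-$d$ Veronese vectors attached to the $L_i$, one has $Q_\ell = E\,D\,E^{T}$ with $D = \operatorname{diag}((2d)!\,a_1,\ldots,(2d)!\,a_r)$. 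Since $D$ is invertible and $\rank Q_\ell = r$, $E$ must have full column rank, so $E^T:\RR^N \to \RR^r$ is surjective; the identity $x^T Q_\ell x = (E^T x)^T D (E^T x)$ then forces $D$ to be PSD and hence every $a_i > 0$. But then $\ell$ is a conic combination of point evaluations, contradicting the choice of $\ell$. The real Waring rank of $f$ therefore strictly exceeds $\rank Q_f$.

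The main obstacle is the tightness step: the decisive observation is that at the extremal rank $\rank Q_f = 3d-2$, any real Waring decomposition of minimum length is automatically a \emph{conic} decomposition by the rank-preserving congruence $Q_\ell = EDE^{T}$, which is exactly what transports the tightness of Theorem~\ref{THM Moments} into the tightness of Theorem~\ref{THM Tensors}. The first two steps are essentially a dictionary exercise once Theorem~\ref{THM Moments} is in hand.
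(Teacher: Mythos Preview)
Your proposal is correct and follows essentially the same route as the paper: translate via $f\mapsto\partial f$, invoke Theorem~\ref{THM Moments} for the decomposition, and for tightness take $f$ with $\partial f$ an extreme (non--point-evaluation) ray of $\sq^*$ of the critical rank and argue that a length-$\rank Q_f$ real Waring decomposition would force all coefficients positive. Your $Q_\ell = EDE^{T}$ argument is in fact a cleaner justification of the sign step than the paper's one-line assertion that any decomposition must contain at least $\rank Q_f$ positive terms.
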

We note that if positive semidefiteness assumption of $Q_f$ is dropped, then it is possible, using for instance lower bounds on the Waring rank from \cite{LandsbergTeitler}, to construct forms $f$ with small rank of $Q_f$, such that the Waring rank is strictly greater than the rank of $Q_f$, even if we allow $2d$-th powers of complex linear forms in the decomposition.
%Stopping criterion for sos relaxations! 
%\noindent

Theorem \ref{THM Main Rank} also leads to an interesting stopping criterion for sum of squares relaxations. Sum of squares methods lead to a hierarchy of relaxations indexed by degree. Under a variety of conditions the hierarchy is guaranteed to approach the optimal solution \cite{Lasserre},\cite{LasserreBook},\cite{pablo}. One of the important questions in this area is as follows: when can we guarantee that we obtained the actual optimum, and thus stop computing relaxations of higher degree?
%We explain the idea on the following simple example. Let $p$ be a (nonhomogeneous) real polynomial in $n$ variables, of degree $2d$. The minimum of $p$ on $\RR^n$, if it exists, is the best lower bound for $p$: $$\min_{x \in \RR^n}p(x)=\max_{p(x)-\gamma \text{ is nonnegative}} \gamma$$
Primal-dual methods for semidefinite programming, when applied to sums of squares relaxations, lead to a solution, along with a certifying dual linear functional on $\h$. If we can conclude that the certifying functional comes from a measure, then the sum of squares relaxation is exact and we obtained the optimal solution. %Thus we do not need to proceed building relaxations of higher degree. 
From Theorem \ref{THM Moments} we see the following stopping criterion:

\begin{CT} Suppose that a sum of squares relaxation truncated in degree $2d$ with $d \geq 3$ returns an optimal linear functional with moment matrix of rank at most $3d-3$. Then the relaxation is exact. \end{CT}

The codimension bound in Theorem \ref{THM Main Rank} comes from the following Theorem, which we hope is interesting in itself.

\begin{theorem}\label{THM mainrank2}
Let $S$ be a subspace of $\cx_{n,d}$ with $n,d \geq 3$ such that $\V(S)=\emptyset$. Let $I=I(S)$ be the ideal generated by $S$ and suppose that $\codim I_{2d} \geq 1$, so that $S$ does not generate all forms of degree $2d$. Then $$\codim I_d \geq 3d-2.$$ 
The bound is tight when $S$ is the degree $d$ part of the complete intersection of a form of degree $3$ and two forms of degree $d$ in $\CP^2$.

Let $S$ be a subspace of $\cx_{n,2}$ with $n \geq 4$ such that $\V(S)=\emptyset$. Let $I=I(S)$ be the ideal generated by $S$ and suppose that $\codim I_{4} \geq 1$. Then $$\codim I_2 \geq 6.$$ 
The bound is tight when $S$ is the complete intersection of $4$ quadrics in $\CP^3$.

\end{theorem}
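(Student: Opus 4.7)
\medskip
\noindent \emph{Proof plan.}
My strategy is to extract a Gorenstein Artinian quotient $A = R/J$ inside $R/I$ and convert the hypothesis $\codim I_{2d}\geq 1$ into a rank bound on a multiplication map. Since $\V(S) = \emptyset$ in $\CP^{n-1}$, the ideal $(S)$ is $\mathfrak{m}$-primary, so by a Bertini-type argument I can pick a regular sequence $g_1,\ldots,g_n \in S$ of common degree $d$. Setting $J = (g_1,\ldots,g_n)$, the quotient $A = R/J$ is Artinian Gorenstein with socle degree $N = n(d-1)$; denote by $\mathfrak{K} := I/J$ the corresponding ideal of $A$. The standard Gorenstein duality $\dim \operatorname{Ann}_A(\mathfrak{K})_k = \dim (R/I)_{N-k}$ combined with $\dim (R/I)_{2d}\geq 1$ yields a nonzero form $\alpha \in R_{N-2d}$ with $\alpha I \subseteq J$.

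The easy cases are $n=4$, $d=2$ (with $N-2d=0$) and $n=3$, $d=3$ (also $N-2d=0$): here $\alpha$ is a nonzero scalar, forcing $I = J$, hence $\dim S = \dim J_d = n$ and $\codim S$ equals $6$ (respectively $7 = 3d-2$). The main work is in the remaining range $n=3$, $d\geq 4$, where $\alpha \in R_{d-3}$ has positive degree. The inclusion $I_d\subseteq (J:\alpha)_d$ together with the identification $(J:\alpha)_d/J_d = \ker(\bar\alpha : A_d \to A_{2d-3})$ gives
\[
\codim I_d \geq \rank\bigl(\bar\alpha : A_d \to A_{2d-3}\bigr).
\]

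The main obstacle is then the sharp rank bound $\rank(\bar\alpha) \geq 3d-2$, equivalently $\dim \ker(\bar\alpha) \leq \binom{d-1}{2}-1$. For this step I would invoke the Cayley-Bacharach duality of \cite{EGH} applied to the complete intersection $J$: the annihilator condition $\alpha \mathfrak{K} = 0$ in $A$, combined with the base-point-free structure of $I_d \supseteq S$ in $\CP^2$, pins down the behavior of multiplication by $\bar\alpha$ on the graded pieces of $A$ tightly enough to force the kernel bound. The tight case is realized by the complete intersection of type $(3,d,d)$: here $\alpha$ may be chosen as $x_1^{d-3}$ after a change of coordinates, and a direct calculation gives $\rank(\bar\alpha) = 3d-2$ exactly. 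For general $n \geq 4$ with $d \geq 3$, and $n \geq 5$ with $d = 2$, the same construction produces $\alpha \in R_{(n-2)d-n}$, and a parallel rank argument---again with the Cayley-Bacharach input on $J$---yields the desired bound.
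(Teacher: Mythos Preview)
Your framework is sensible and genuinely different from the paper's, but the proposal leaves exactly the hard step unproved.

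For $n=3$ the paper does \emph{not} pass to an Artinian complete intersection of three degree-$d$ forms. Instead it chooses only two transversely meeting forms $p,q\in S$, writes a linear functional $\ell$ vanishing on $I_{2d}$ as a combination of point evaluations at the $d^2$ intersection points, and bounds $\codim S$ below by $\operatorname{Ind}_d(\Gamma')-\operatorname{Dep}_d(\Gamma')$ for the support $\Gamma'$ of $\ell$. The entire content is then the inequality $\operatorname{Ind}_d(\Gamma')-\operatorname{Dep}_d(\Gamma')\geq 3d-2$, which is the paper's Master Theorem (Theorem~\ref{THM Master}) and is proved by a careful iterated application of Cayley--Bacharach (Theorem~CB7 of \cite{EGH}). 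Your reduction to the rank bound $\rank(\bar\alpha:A_d\to A_{2d-3})\geq 3d-2$ is an elegant algebraic reformulation, but the sentence ``I would invoke the Cayley--Bacharach duality \dots\ pins down the behavior of multiplication by $\bar\alpha$'' is not a proof: this rank inequality is precisely the substantive step, of the same order of difficulty as the Master Theorem, and nothing in your outline indicates how the CB machinery actually yields it. Note too that since $(J:\alpha)_d\supseteq J_d$ and $\V(J_d)=\emptyset$ already, the base-point-freeness of $I_d$ gives you nothing extra here; the bound you need is a statement about multiplication by an \emph{arbitrary} nonzero $\alpha\in A_{d-3}$ in a $(d,d,d)$ complete intersection, and that requires an honest argument.

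For $n>3$ the gap is wider. The paper does \emph{not} run any rank argument in higher $n$; it inducts on $n$ by intersecting $S$ with the subring in one fewer variable, uses $\V(S)=\emptyset$ to force the restricted system to be base-point-free, and shows that if $\codim S<3d-2$ then $S$ must contain enough monomials to generate all of degree $2d$, contradicting $\codim I_{2d}\geq 1$. Your proposed ``parallel rank argument'' would require controlling multiplication by a form of degree $(n-2)d-n$ in a complete intersection of type $(d,\ldots,d)$ in $n$ variables, with the target bound $3d-2$ independent of $n$; you give no mechanism for this, and it is far less transparent than the paper's two-line induction. If you want to salvage your approach, prove the $n=3$ rank bound carefully and then adopt the paper's induction for larger $n$.
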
 

We remark that the tight examples in Theorem \ref{THM mainrank2} come from three or four variate constructions, i.e. $I_1$ has codimension $3$ for $d \geq 3$ or $4$ for $d=2$. If we want to instead bound the size of $I_d$ as the function of the codimension of $I_1$, then better bounds are possible. For instance it was shown in \cite{GorDeg4} that for the case of socle of degree $4$ we have $$\lim_{(\codim I_1) \rightarrow \infty} \min \frac{\codim I_2}{(\codim I_1)^{2/3}}=6^{2/3}$$ 
for all Gorenstein ideals with socle of degree $4$. We are not aware of any results for socles of higher degree, but these would be very interesting.
\subsection{Positive forms and Positive Gorenstein Ideals.} In 1893 Hilbert showed the following \cite{Hilbert2}:

%that given a nonnegative form $p \in P_{3,2d}$ there exists a nonnegative form $q \in P_{3,2d-4}$ such that $pq$ is a sum of squares \cite{Hilbert2}. 
\begin{theorem}[Hilbert]\label{THM Hilbert}
Let $p$ be a nonnegative form of degree $2d$ in $3$ variables. Then there exists a nonnegative form $q$ of degree $2d-4$ such that $pq$ is a sum of squares.
\end{theorem}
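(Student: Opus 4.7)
The plan is to argue by contradiction, using convex duality together with the structure theorem for maximal positive Gorenstein ideals (Theorem \ref{THM Main Rank}). For $d=2$ the conclusion is immediate from Hilbert's 1888 result $P_{3,4}=\Sigma_{3,4}$ with $q=1$, so assume $d\geq 3$.

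First I would apply hyperplane separation. Suppose the conclusion fails, so the closed convex cones $p\cdot P_{3,2d-4}$ and $\Sigma_{3,4d-4}$ in $\rx_{3,4d-4}$ meet only at the origin. Hahn-Banach together with Caratheodory yields an extreme ray $\ell$ of $\Sigma_{3,4d-4}^*$ with $\ell(pq)\leq 0$ for every $q\in P_{3,2d-4}$ and $\ell(pq_0)<0$ for some $q_0$. I then rule out that $\ell$ is a point evaluation: if $\ell=\ell_v$, testing the inequality against the strictly positive form $q=(x_1^2+x_2^2+x_3^2)^{d-2}$ forces $p(v)=0$, hence $\ell(pq_0)=0$, a contradiction. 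Thus $I(\ell)$ is a maximal positive Gorenstein ideal with socle of degree $4d-4=2(2d-2)$ that is not a point evaluation, so Theorem \ref{THM Main Rank} (applied with its $d$ replaced by $2d-2\geq 4$) gives: $I(\ell)_{2d-2}$ has no common complex zero, $I(\ell)_{2d-2}$ generates $I(\ell)_{4d-4}$, and $\codim I(\ell)_{2d-2}\geq 6d-8$.

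Next I would extract the dual-measure representation of the separator. The functional $\ell_p:q\mapsto \ell(pq)$ on $\rx_{3,2d-4}$ is nonpositive on $P_{3,2d-4}$, so $-\ell_p$ lies in $P_{3,2d-4}^*$, whose extreme rays are the point evaluations. Hence there exist $c_i>0$ and $v_i\in\RR^3\setminus\{0\}$ with
$$\ell(pq)=-\sum_{i=1}^{k} c_i\, q(v_i)\quad \text{for all } q\in\rx_{3,2d-4}.$$

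The main obstacle is to reconcile these two descriptions of $\ell$, and this is where I would invoke Theorem \ref{THM Master} (Cayley-Bacharach duality). Writing $\ell=\partial F$ for a Macaulay form $F\in\rx_{3,4d-4}$, the signed-measure identity translates apolarly into $\partial p(F)=-\sum c_i'\, L_i^{2d-4}$, where $L_i$ is the linear form dual to $v_i$; in other words $\partial p(F)$ has real Waring rank at most $k$, with negative coefficients. Theorem \ref{THM Master}, combined with the base-point-freeness and codimension bound on $I(\ell)_{2d-2}$, should then force $\ell$ itself to admit a representation $\ell=\sum\alpha_i\,\ell_{v_i}$; the positivity $\alpha_i>0$ is extracted from the PSD middle catalecticant $Q_\ell$ by testing on forms in $\rx_{3,2d-2}$ that concentrate on a single $v_i$. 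Either $k\geq 2$, contradicting extremality of $\ell$, or $k=1$, contradicting that $\ell$ is not a point evaluation. This last step---upgrading the signed-measure structure of $\ell\circ p$ to a genuine positive measure representation of $\ell$ via the apolar/Cayley-Bacharach apparatus---is the key difficulty, and the positivity of $\ell$ on squares is precisely what permits the upgrade.
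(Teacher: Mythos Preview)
Your argument has a genuine gap at the separation step, and the final paragraph does not close it. From $p\cdot P_{3,2d-4}\cap\Sigma_{3,4d-4}=\{0\}$ separation gives you an $\ell\in\Sigma_{3,4d-4}^*$ with $\ell(pq)\le 0$ for all $q\in P_{3,2d-4}$; it does \emph{not} give $\ell(pq)=0$ for all $q\in\rx_{3,2d-4}$, so you have not placed $p$ inside the Gorenstein ideal $I(\ell)$. The structure results you invoke (Theorem~\ref{THM Main Rank}, Theorem~\ref{THM Master}) are statements about ideal membership and say nothing about $p$ in this situation. Your last paragraph tries to bridge this by passing from a measure representation of $q\mapsto\ell(pq)$ to one of $\ell$ itself, but the key sentence (``Theorem~\ref{THM Master} \dots\ should then force $\ell$ itself to admit a representation $\ell=\sum\alpha_i\ell_{v_i}$'') is an assertion, not an argument; I do not see a mechanism by which Cayley--Bacharach duality produces this conclusion. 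There is also a smaller slip earlier: decomposing a separating functional into extreme rays of $\Sigma_{3,4d-4}^*$ via Carath\'eodory does not let you pick a single extreme summand that still satisfies $\ell(pq)\le 0$ for \emph{every} $q\in P_{3,2d-4}$, since the sign conditions for different $q$ may be carried by different summands.

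The paper closes the gap with one idea you are missing: reduce first to a \emph{strictly positive} $p$. The set of $f\in P_{3,2d}$ admitting a nonnegative multiplier is closed, so if a counterexample exists one can be taken in the interior of $P_{3,2d}$. For strictly positive $p$, any $q$ with $pq\in\Sigma_{3,4d-4}$ is automatically nonnegative; hence failure for all nonnegative $q$ means failure for all nonzero $q\in\rx_{3,2d-4}$, and the entire linear subspace $p\cdot\rx_{3,2d-4}$ meets $\Sigma_{3,4d-4}$ only at the origin. Now separation yields $\ell\in\Sigma_{3,4d-4}^*$ vanishing on $p\cdot\rx_{3,2d-4}$, i.e.\ $p\in I(\ell)$, and the contradiction comes directly from Theorem~\ref{THM Pos Gorenstein} (a positive Gorenstein ideal with socle of degree $4(d{-}1)$ contains no strictly positive form of degree $2(d{-}1)+2=2d$), not from Theorem~\ref{THM Main Rank}.
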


By reducing degrees of multipliers $q$ Hilbert concluded that for every $p \in P_{3,2d}$ there exists a sum of squares multiplier $q$ such that $pq$ is a sum of squares, where the degree of $q$ is at most $d(d/2-1)$ if $d$ is even, and at most $(d-1)^2/2$ if $d$ is odd. This allowed Hilbert to conclude that any $p \in P_{3,2d}$ is a sum of squares of rational functions. Later, Hilbert posed his 17th problem, which asked whether for any number of variables, a nonnegative polynomial is a sum of squares of rational functions. This was answered in the affirmative by Artin and Schreier \cite{BCR}. The development of general theory, and the difficulty of Hilbert's proof led to his result on trivariate forms being nearly forgotten.

However, the bounds for the degrees of sums of squares multipliers are very poorly understood in the general case. In fact, the general approach leads to significantly worse bounds than Hilbert's bounds in the case of trivariate forms. We use Theorem \ref{THM Pos Gorenstein}, stated below, to reprove Hilbert's result. We note that in fact Hilbert proved more: the form $pq$ was not just a sum of squares, but a sum of $3$ squares, which allowed Hilbert to conclude that any $p \in P_{3,2d}$ is a sum of squares of at most $3$ rational functions. We do not provide a bound on the number of squares, however, we hope that a simple proof of this result will be useful in understanding the bounds on degrees of sum of squares multipliers.

Our main theorem on the structure of trivariate positive Gorenstein ideals states that such ideals cannot contain positive forms of low degree. This leads to a new, significantly simpler proof of Theorem \ref{THM Hilbert}.

\begin{theorem}\label{THM Pos Gorenstein}
Let $I$ be a positive Gorenstein ideal in $\RR[x_1,x_2,x_3]$ with socle $\ell$ of degree $4d$. Then $I$ does not contain a strictly positive form of degree $2d+2$. %(resp ??).
\end{theorem}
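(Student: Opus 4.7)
I argue by contradiction: suppose $I$ contains a strictly positive form $p \in \rxt_{2d+2}$. Unfolding the definition of $I(\ell)$, this gives $\ell(pq) = 0$ for every $q \in \rxt_{2d-2}$, and in particular $\ell(p r^2) = 0$ for every $r \in \rxt_{d-1}$.

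The first step is to combine this vanishing with strict positivity through a sums-of-squares identity. By a Reznick/Positivstellensatz-type result, there exists $N \geq 0$ such that $\sigma^N p = \sum_i s_i^2$, where $\sigma = x_1^2 + x_2^2 + x_3^2$ and $s_i \in \rxt_{N+d+1}$. Assuming one can take $N \leq d-1$, then for any $r \in \rxt_{d-1-N}$ the product $\sigma^N p r^2 = \sum_i (s_i r)^2$ is a sum of squares of forms in $\rxt_{2d}$, and since $\sigma^N r^2 \in \rxt_{2d-2}$ the hypothesis forces $\ell(\sigma^N p r^2) = 0$. Because $\ell$ is nonnegative on squares and the terms sum to zero, each $Q_\ell(s_i r) = 0$, so $s_i r \in I_{2d}$. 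Consequently the ideal $J = (s_1, \ldots, s_k)$ satisfies $J_{2d} \subset I_{2d}$, and the common zero locus $V(J) \subset V(\sigma) \cup V(p)$ is a union of two complex curves in $\CP^2$ with no real projective points.

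Next, I invoke Theorem \ref{THM Master} to conclude. The Cayley-Bacharach tool of the paper, applied to positive Gorenstein ideals with socle of degree $4d$, controls how real zeros of subsystems of $I_{2d}$ support the nonnegativity of $\ell$ on squares. Applied to $J_{2d} \subset I_{2d}$, the absence of real points in $V(J_{2d})$ should force $\ell$ to be trivial, contradicting its role as the nonzero socle of $I$. If $I$ is not already maximal, I would reduce to the maximal case: since $\sigma^N p r^2 \in \Sigma_{3,4d}$, the argument above actually shows $J_{2d} \subset I(\ell')_{2d}$ for every extreme ray $\ell'$ of $\Sigma_{3,4d}^*$ in the face containing $\ell$, so it suffices to derive a contradiction for any such extreme $\ell'$; there Theorem \ref{THM Main Rank} (with $d$ replaced by $2d$) gives the additional structural conclusions $V(I^{\max}_{2d}) = \emptyset$, $I^{\max}_{2d}$ generates $I^{\max}_{4d}$, and $\codim I^{\max}_{2d} \geq 6d-2$, which should sharpen the Cayley-Bacharach count.

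The main obstacle is the final step: deducing an explicit contradiction from $J_{2d} \subset I_{2d}$ together with $V(J) \cap \RR\PP^2 = \emptyset$ via the Cayley-Bacharach machinery packaged in Theorem \ref{THM Master}. The degree $2d+2$ is precisely the Gorenstein dual of $2d-2$ under the Hilbert-function symmetry $h_k = h_{4d-k}$, and the heart of the argument is to show that real zeros in $V(J_{2d})$ are forced by the positivity of $\ell$ on squares, which strict positivity of both $\sigma$ and $p$ forbids. A secondary concern is making the Reznick-type step work with $N \leq d-1$; while classical trivariate Positivstellensatz bounds typically suffice, borderline cases of $p$ close to the boundary of the positive cone may require refinement.
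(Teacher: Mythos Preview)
Your proposal has two genuine gaps, and the overall strategy diverges from the paper's in a way that does not seem to close.

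First, the Reznick step is not available in the form you need. Reznick's uniform denominators theorem gives an exponent $N$ that grows without bound as $p$ approaches the boundary of the positive cone; there is no reason to expect $N\le d-1$, and you give no argument for it. You cannot repair this by quoting Hilbert's ternary theorem (a multiplier of degree $2d-2$ making $pq$ a sum of squares), because in this paper that theorem is \emph{derived from} Theorem~\ref{THM Pos Gorenstein}, so the argument would be circular. Since the whole chain ``$\sigma^N p r^2\in\Sigma_{3,4d}$ with $\sigma^N r^2\in\rxt_{2d-2}$'' collapses without $N\le d-1$, this is not a secondary concern but a fatal one.

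Second, even granting $J_{2d}\subset I_{2d}$ and $V(J)\cap\RR\PP^2=\emptyset$, you never actually extract a contradiction. Theorem~\ref{THM Master} is a statement about a subvariety $\Gamma'\subset\Gamma$ where $\Gamma$ is the transverse intersection of two specific plane curves of degrees $k\ge s\ge 3$; you have no such pair of curves, and the conclusion you want (``no real points forces $\ell$ trivial'') is not what the theorem says. The paper's proof is organized quite differently: one first passes to an extreme ray of the slice $C_p=\Sigma_{3,4d}^*\cap L_p$ (not of the full dual cone), so that by Lemma~\ref{Lemma extreme rays} the form $p$ and the kernel $I_{2d}(\ell)$ share no zero in $\CP^2$; then Lemma~\ref{LEMMA Bertini} produces a single $q\in I_{2d}(\ell)$ meeting $p$ transversely in $2d(2d+2)$ necessarily non-real points $\Gamma$; writing $\ell$ as a combination of evaluations at these conjugate pairs, Lemma~\ref{LEMMA conditions} forces $\Dep_{2d}(\Gamma')\ge\tfrac12|\Gamma'|$, while Theorem~\ref{THM Master} applied with $(k,s)=(2d+2,2d)$ gives $\Ind_{2d}(\Gamma')-\Dep_{2d}(\Gamma')\ge 2d>0$, a direct numerical contradiction. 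No Positivstellensatz enters.
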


%We derive the following consequences.

We also consider the question of optimality of the degree of multipliers in Hilbert's theorem. We note that since $P_{3,2}=\Sigma_{3,2}$, Theorem \ref{THM Hilbert} implies that for $p \in P_{3,6}$ there exists $q\in \Sigma_{3,2}$ such that $pq$ is a sum of squares. This is an optimal bound on the degree of $q$ since $P_{3,6}\neq \Sigma_{3,6}$. Similarly, since $P_{3,4}=\Sigma_{3,4}$ we see that for $p \in P_{3,8}$ there exists $q\in \Sigma_{3,4}$ such that $pq$ is a sum of squares. However it is not known whether quadratic multipliers would suffice for degree $8$ ternary forms, and more generally, whether bounds of Theorem \ref{THM Hilbert} are optimal.

Construction (or proof of existence) of nonnegative forms that are not sums of squares is already nontrivial \cite{Rez1}. Our task is to construct forms that are not sums of squares even after multiplication by forms of certain degree. We use an extension of Hilbert's ideas from \cite{Hilbert1} together with tools from convex geometry to prove the following:

\begin{cor}\label{COR Bounds}
For all $k \in \mathbb{N}$ there exist forms $p \in \rx_{3,4k+6}$ such that $pq$ is not a sum of squares for all $q \in \Sigma_{3,2k}$. There exist forms in $P_{4,6}$ and $P_{7,4}$ such that $pq$ is not a sum of squares for all $q \in \Sigma_{4,2}$ (resp. $\Sigma_{7,2}$).
\end{cor}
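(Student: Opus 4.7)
The plan is to extend Hilbert's construction from \cite{Hilbert1} of nonnegative ternary sextics that are not sums of squares, enhanced so that the non-SOS property persists after multiplication by an SOS form of prescribed degree. The main geometric input is Cayley--Bacharach duality \cite{EGH} on a smooth cubic curve, the same tool used in our other results on positive Gorenstein ideals.

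Fix $k$ and set $m = 3k+3$, the common degree of the square-roots appearing in any SOS representation of a product $pq$ with $p \in \rx_{3,4k+6}$ and $q \in \rx_{3,2k}$. Choose a smooth real cubic $C \subset \RR\PP^2$ with many real points and a real plane curve $D$ whose degree is selected so that the complete intersection $X = C \cap D$ admits a partition $\Gamma \sqcup \Lambda$ with three properties: (i) every form of degree $m$ vanishing on $\Gamma$ vanishes on all of $X$ by Cayley--Bacharach, and hence on $\Lambda$; (ii) $\Lambda$ is sufficiently large and in general enough position that no nonzero form of degree $k$ vanishes on $\Lambda$; and (iii) the linear system of forms of degree $4k+6$ vanishing to order $2$ on $\Gamma$ has positive codimension, leaving room to accommodate a nonnegative form with these vanishing conditions. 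When $k=0$ this reduces to Hilbert's classical setup of two cubics meeting in nine points, eight of which form $\Gamma$; for larger $k$ an analogous configuration is obtained by an appropriate choice of the degrees of $C$ and $D$.

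Construct a nonnegative form $p \in P_{3,4k+6}$ vanishing to order exactly $2$ at each point of $\Gamma$, and strictly positive on $\Lambda$ and elsewhere on $\RR\PP^2 \setminus \Gamma$; its existence follows by a convex-geometric openness argument starting from a candidate such as the square of a form vanishing on part of $\Gamma$ plus a small strictly positive perturbation. Suppose for contradiction that $pq = \sum_i g_i^2 \in \Sigma_{3,6k+6}$ for some nonzero $q \in \Sigma_{3,2k}$, with $g_i \in \rx_{3,m}$. Because $p$ vanishes to order $2$ on $\Gamma$ and $q \geq 0$, each $g_i$ vanishes on $\Gamma$; by Cayley--Bacharach each $g_i$ vanishes on $\Lambda$, so $pq$ vanishes on $\Lambda$. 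Strict positivity of $p$ on $\Lambda$ forces $q$ to vanish on $\Lambda$. Writing $q = \sum_j r_j^2$ with $r_j \in \rx_{3,k}$, each $r_j$ vanishes on $\Lambda$; by property (ii), each $r_j = 0$, so $q = 0$, a contradiction.

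The cases $P_{4,6}$ and $P_{7,4}$ with quadratic SOS multipliers follow the same template, with the cubic curve $C$ replaced by a complete intersection of four quadrics in $\CP^3$ (respectively, an analogous quadratic configuration in higher-dimensional projective space), and Theorem \ref{THM mainrank2} providing the vanishing constraints in place of the classical cubic-curve Cayley--Bacharach. The main obstacle is the construction of the nonnegative form $p$ with the prescribed vanishing pattern, since nonnegativity couples all points of $\RR\PP^2$ simultaneously and cannot be enforced by a dimension count alone; the argument requires a careful choice of starting candidate plus convex-geometric openness to ensure compatibility of vanishing on $\Gamma$ with strict positivity elsewhere.
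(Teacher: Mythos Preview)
Your approach follows Hilbert's original template---construct an explicit nonnegative form with prescribed double zeros, then use Cayley--Bacharach to force any hypothetical SOS decomposition to vanish where it should not---but there are two genuine gaps.

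First, the construction of $p$ is not carried out. You write that its existence ``follows by a convex-geometric openness argument starting from a candidate such as the square of a form vanishing on part of $\Gamma$ plus a small strictly positive perturbation,'' but this does not work: adding a strictly positive perturbation destroys the double vanishing on $\Gamma$, which is exactly the property you need to force the $g_i$ to vanish there. Building a nonnegative form that vanishes to order exactly two on a prescribed set $\Gamma$ while remaining strictly positive on a prescribed complementary set $\Lambda$ is the hard part of Hilbert-style constructions, and nothing in your sketch accomplishes it. You also never specify the degree of $D$ or verify that a partition $\Gamma \sqcup \Lambda$ with properties (i)--(iii) exists for each $k$.

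Second, for $P_{4,6}$ and $P_{7,4}$ you invoke Theorem~\ref{THM mainrank2}, but that theorem gives codimension bounds for $I_d$ in base-point-free situations; it does not supply the Cayley--Bacharach-type vanishing constraint you need here.

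The paper takes a different route that sidesteps the construction problem entirely. Rather than building a specific $p$, it proves an abstract criterion (Theorem~\ref{THM Multipliers}): if $\Gamma$ is a fully real generic complete intersection of $n-1$ forms of degree $d$ and the numerical inequality $\Ind_{2d}(\Gamma) \geq \Ind_{d+k}(\Gamma) + \Ind_k(\Gamma)$ holds, then some $p \in P_{n,2d}$ has no SOS multiplier in $\Sigma_{n,2k}$. The proof argues by contradiction on the level of \emph{values} on $\Gamma$: Theorem~\ref{THM Values of Nonnegative Forms} guarantees that every strictly positive vector in $H_{2d}(S)$ is realized by some nonnegative form, so one passes to a face of the polyhedral cone $H_{2d}(S) \cap \RR^m_+$ of the right codimension and forces the multiplier $q$ to vanish on enough points of $\Gamma$ that $q=0$. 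The corollary then follows by checking the numerical inequality in each case: two curves of degree $2k+3$ in $\RR\PP^2$ for the ternary family, three cubics in $\RR\PP^3$ for $P_{4,6}$, and six quadrics in $\RR\PP^6$ for $P_{7,4}$. This replaces the delicate pointwise construction of $p$ with a dimension count and a closedness argument (Lemma~\ref{LEMMA SOS is Closed}), which is what makes the argument go through uniformly.
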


We note that this implies, in particular, that there exist $p \in \Sigma_{3,10}$ for which quadratic multipliers do not suffice, and $p \in \Sigma_{3,14}$ for which quartic multipliers do not suffice. While Theorem \ref{THM Hilbert} optimally settles the case of $P_{3,6}$ we still leave open the case of $P_{4,4}$, the other smallest case where nonnegative polynomials are not the same as sums of squares. 

\begin{question}
Let $p \in P_{4,4}$. Does there exist $q \in P_{4,2}=\Sigma_{4,2}$ such that $pq$ is a sum of squares?
\end{question}
We note that Corollary \ref{COR Bounds} implies that if we increase the degree from $P_{4,4}$ then quadratic multipliers will not be sufficient.

Theorem \ref{THM Pos Gorenstein} also has interesting consequences in polynomial optimization. It directly leads to the following Corollary:

\begin{cor}\label{COR Square}
Let $p$ be a strictly positive form in $\rx_{3,2d}$. Then $p^2$ lies strictly in the interior of the cone of sums of squares $\Sigma_{3,4d}$.
\end{cor}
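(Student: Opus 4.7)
The plan is to use convex duality: a point lies in the interior of a closed convex cone if and only if every nonzero element of the dual cone is strictly positive on it. Applied here, $p^2 \in \operatorname{int}(\Sigma_{3,4d})$ if and only if $\ell(p^2) > 0$ for every nonzero $\ell \in \Sigma_{3,4d}^*$. So I would argue by contradiction and assume there exists a nonzero $\ell \in \Sigma_{3,4d}^*$ with $\ell(p^2)=0$.

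Next I would translate this into a statement about the positive Gorenstein ideal $I(\ell)$. Since $\ell$ is nonnegative on squares, the associated quadratic form $Q_\ell$ on $\rx_{3,2d}$ defined by $Q_\ell(q)=\ell(q^2)$ is positive semidefinite. The identity $Q_\ell(p)=\ell(p^2)=0$ then forces $p$ to lie in the kernel of $Q_\ell$, which as noted in Section~\ref{SEC Results} coincides with $I(\ell)_{2d}$. Thus $p \in I(\ell)_{2d}$.

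The key step is then to promote this to a strictly positive element of $I(\ell)$ of degree $2d+2$. Multiplying $p$ by the strictly positive quadratic form $x_1^2+x_2^2+x_3^2$ gives a form $p\cdot(x_1^2+x_2^2+x_3^2)$ which is strictly positive on $\RR^3\setminus\{0\}$, has degree $2d+2$, and, being a polynomial multiple of $p$, lies in $I(\ell)_{2d+2}$. But $I(\ell)$ is a positive Gorenstein ideal with socle of degree $4d$, so Theorem~\ref{THM Pos Gorenstein} forbids the existence of such a form. This contradiction completes the proof.

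I expect no real obstacle here: the only nontrivial input is Theorem~\ref{THM Pos Gorenstein} itself, and everything else is a direct application of standard convex duality together with the identification $\ker Q_\ell = I(\ell)_{2d}$. The one point worth double-checking is that the reduction to interior membership via the dual cone $\Sigma_{3,4d}^*$ is applicable, which holds because $\Sigma_{3,4d}$ is a full-dimensional closed convex cone in $\rx_{3,4d}$.
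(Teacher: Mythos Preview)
Your proposal is correct and follows essentially the same approach as the paper, which simply asserts that the corollary follows immediately from Theorem~\ref{THM Pos Gorenstein}. Your version spells out the small degree-adjustment step---multiplying $p\in I(\ell)_{2d}$ by $x_1^2+x_2^2+x_3^2$ to obtain a strictly positive form in $I(\ell)_{2d+2}$---needed to invoke that theorem as stated; the paper leaves this implicit.
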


%Optimization idea....
%\noindent

We observe that a form $p \in \h$ is in the interior of $\sq$ if and only if there exists a positive definite Gram matrix for $p$ (cf Lemma \ref{LEMMA Gram} and \cite{Gram}). We also note that a feasible semidefinite program will return Gram matrix of maximal rank. Therefore we see that $p \in P_{3,2d}$ is strictly positive if and only if $p^2$ has a Gram matrix of full rank. This provides with a numerical way to test strict positivity of $p \in \rx_{3,2d}$.

This idea can be extended to testing feasibility of systems of equations. Consider a system of equations $f_1(x)=\dots=f_k(x)=0$ with $f_i \in \rx_{3,d}$. The system is infeasible if and only if $q=f_1^2+\dots+f_n^2$ is strictly positive. We can decide strict positivity of $q$ for ternary forms, by checking whether $q^2$ lies on the boundary of $\Sigma_{3,4d}$.

This observation leads to the following question: 

\begin{question}Let $p \in \rx_{n,2d}$ be a strictly positive form. When can $p^2$ lie on the boundary of $\Sigma_{n,4d}$? \end{question}

We show that this is not possible for ternary forms, and results of \cite{GB2012} also show that this is not possible when $p \in \rx_{4,2}$, but the general case is open, and has potentially interesting consequences for deciding complexity of semidefinite programming.

\section{Master Theorem}

We begin by proving our main tool from Cayley-Bacharach duality. Let $\Gamma$ be a finite collection of points in $\CP^{n-1}$, let $I=I(\Gamma)$ be the ideal of $\Gamma$ and let $A(\Gamma)$ be the coordinate ring of $\Gamma$: $A(\Gamma)=\cx/I$. Evaluations on points of $\Gamma$ are linear functionals on $\cx$. Let $\Ind_d(\Gamma)$ be the number of linearly independent evaluation functionals on forms of degree $d$. It follows that $\Ind_d(\Gamma)=\dim A(\Gamma)_d$. Let $\Dep_{d}(\Gamma)$ be the number of linearly dependent relations between the evaluation functionals. Then $\Dep_{d}(\Gamma)=|\Gamma|-\Ind_d{\Gamma}$.
%Let $A(\Gamma)$ denote the coordinate ring of $\Gamma$.
\begin{theorem}\label{THM Master}
Let $p$ and $q$ be two curves in $\CP^2$, with $\deg p=k$, $\deg q=s$ and $k \geq s \geq 3$, intersecting transversely in a $0$-dimensional variety $\Gamma$. Then for any subvariety $\Gamma' \subseteq \Gamma$ such that $\Dep_s(\Gamma') \geq 1$ we have: 
\begin{equation*}\label{EQN master}
\Ind_s(\Gamma')-\Dep_s(\Gamma') \geq \Ind_s(\Gamma)-\Dep_s(\Gamma). \end{equation*}
\end{theorem}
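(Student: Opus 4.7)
Let $\Gamma'' = \Gamma \setminus \Gamma'$. The plan is to translate the stated inequality, via Cayley--Bacharach duality, into a Clifford-type lower bound on the conditions $\Gamma''$ imposes on forms of degree $k-3$, and then verify that bound on the curve $C := \{q = 0\}$.

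First I would apply the residual form of Cayley--Bacharach from \cite{EGH} to the complete intersection $\Gamma$, which has socle degree $\sigma = k+s-3$. For the residual pair $(\Gamma', \Gamma'')$ in degree $s$ this gives
\[
\Dep_s(\Gamma') \;=\; \dim\bigl(I(\Gamma'')_{k-3}/I(\Gamma)_{k-3}\bigr) \;=\; h_\Gamma(k-3) - h_{\Gamma''}(k-3),
\]
where $h_X$ denotes the Hilbert function; specializing to $\Gamma'' = \emptyset$ recovers $\Dep_s(\Gamma) = h_\Gamma(k-3)$. Combining these two identities with $\Ind_s(X) + \Dep_s(X) = |X|$ for $X = \Gamma, \Gamma'$ and simplifying, the desired inequality becomes equivalent to the single estimate
\[
\Ind_{k-3}(\Gamma'') \;=\; h_{\Gamma''}(k-3) \;\geq\; \tfrac{1}{2}\,|\Gamma''|,
\]
so it suffices to establish this.

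The hypothesis $\Dep_s(\Gamma') \geq 1$ supplies a form $g \in I(\Gamma'')_{k-3} \setminus I(\Gamma)_{k-3}$; since $g$ does not vanish on all of $\Gamma$ it is not a multiple of $q$. Viewing $\Gamma''$ as an effective divisor of degree $|\Gamma''|$ on $C$, the restriction sequence $0 \to \mathcal{O}_{\CP^2}(k-3-s) \to \mathcal{O}_{\CP^2}(k-3) \to \mathcal{O}_C(k-3) \to 0$ identifies
\[
\Ind_{k-3}(\Gamma'') \;=\; h^0(\mathcal{O}_C(k-3)) - h^0\bigl(\mathcal{O}_C(k-3)(-\Gamma'')\bigr).
\]
In the generic case (where $q$ is irreducible), Bezout applied to $g$ and $q$ gives $|\Gamma''| \leq s(k-3)$, so the divisor $D := \mathcal{O}_C(k-3)(-\Gamma'')$ has nonnegative degree. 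If $D$ is non-special then $\Ind_{k-3}(\Gamma'') = |\Gamma''|$, which is more than required. If $D$ is special, Clifford's inequality on $C$ (a plane curve of arithmetic genus $\binom{s-1}{2}$) gives $h^0(D) \leq \tfrac{1}{2}\deg D + 1$. A short computation of $h^0(\mathcal{O}_C(k-3))$ -- from the restriction sequence when $k-3 < s$, from Riemann--Roch when $k-3 \geq s$ -- shows $h^0(\mathcal{O}_C(k-3)) \geq \tfrac{1}{2}s(k-3) + 1$, with equality precisely in the boundary case $k = s$ (where $\mathcal{O}_C(k-3) = K_C$). Substituting yields the required half-Clifford estimate.

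The main obstacle will be the case where $q$ is reducible or $C$ is singular: Clifford's inequality then must be invoked in the form valid for reduced (Gorenstein) plane curves, or applied componentwise by splitting $C$ along its irreducible components and exploiting that $\Gamma$ is a complete intersection on each component. The arithmetic in the last paragraph has enough slack under $k \geq s \geq 3$ for the conclusion to persist in these extensions.
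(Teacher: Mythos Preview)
Your reduction to $\Ind_{k-3}(\Gamma'') \geq \tfrac{1}{2}|\Gamma''|$ is correct, and this is exactly what the paper isolates too: its key bound $\dim \cx_{3,k-3} \geq \dim J''_{k-3} + \Dep_{k-3}(\Gamma'')$ unwinds to the same statement once you use $\Ind_{k-3}(\Gamma'') = \dim\cx_{3,k-3} - \dim J''_{k-3}$ and $\Ind+\Dep=|\Gamma''|$. The two proofs diverge only in how they establish this half-estimate. You restrict to $C=\{q=0\}$ and apply Clifford to $D=\mathcal{O}_C(k-3)(-\Gamma'')$; the paper instead works on $\{p=0\}$ (taken smooth via Bertini), intersects it with your form $g$ to build a \emph{second} complete intersection $G\supset\Gamma''$ of degrees $k$ and $k-3$, and applies CB7 once more to the residual pair inside $G$. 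Since $I(\Gamma'')_{k-3}$ and $I(G\setminus\Gamma'')_{k-3}$ meet only in the line $\langle g\rangle$, a plain dimension count gives the bound---no Riemann--Roch, no Clifford, and no hypothesis on $q$.

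Your Clifford route is conceptually attractive (the theorem really is a Clifford bound in disguise), but the obstacle you flag is genuine and not dispatched by the sketch: if $q$ is reducible and $g$ vanishes on one of its components, the Bezout step $|\Gamma''|\leq s(k-3)$ fails, $\deg D$ can be negative, and Clifford does not apply as written; extending to Gorenstein or reducible curves would require real control of the multidegrees of $D$ on the components, which is not automatic. The paper sidesteps this precisely by working on the \emph{larger}-degree curve: since $\deg g=k-3<k=\deg p$, irreducibility of $p$ alone already forces $g$ and $p$ to share no component, so the auxiliary scheme $G$ is automatically zero-dimensional and the second CB7 application goes through cleanly.
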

\begin{proof}

We note that by application of Bertini's Theorem we may choose $p$ to be smooth. Let $\Gamma''$ be the complement of $\Gamma'$ in $\Gamma.$ Let $J$, $J'$ and $J''$ be the ideals of $\Gamma$, $\Gamma'$ and $\Gamma''$ respectively. %We observe that of $\Gamma'=\Gamma$, then $\Ind_s (\Gamma')=\dim \cx_{3,s}-\dim A(\Gamma)_s$ and $\Dep_s(\Gamma')=ks-\Ind_s (\Gamma')$.

By applying Theorem CB7 of \cite{EGH} we see that $\Dep_s(\Gamma')$ is equal to the dimension of the vector space of forms of degree $k-3$ vanishing on $\Gamma''$ modulo vanishing on $\Gamma$. Therefore $\Dep_s(\Gamma')=\dim J''_{k-3}-\dim J_{k-3}.$ 

We know that $\Dep_s(\Gamma') \geq 1$, and therefore there exists a form $r$ of degree $k-3$ vanishing on $\Gamma''$. Let $G$ be the intersection of $r$ with $p$. Since $p$ is a smooth curve, we see that $G$ is a zero dimensional scheme and $G$ contains $\Gamma''$. Let $G'$ be the subscheme residual to $\Gamma''$ in $G$. Let $I'$ be the ideal of curves containing $G'$. We can now apply Theorem CB7 to $\Gamma''$ as a subscheme of $G$. It follows that $\Dep_{k-3}(\Gamma'')$ is equal to the dimension of the vector space of curves of degree $k-3$ containing $G'$ modulo containing all of $G$. Since $r$ is the unique form of degree $k-3$ vanishing on all of $G$ it follows that $\Dep_{k-3} \Gamma''=\dim I'_{k-3}-1$, and also  $J''_{k-3} \cap I'_{k-3}$ is a 1-dimensional vector space spanned by $r$. Therefore we obtain the following inequality:
\begin{equation*}\label{EQN Ind+Dep1}
\dim \cx_{3,k-3} \geq  \dim J''_{k-3}+\Dep_{k-3}(\Gamma'').\end{equation*}
By another application of Theorem CB7 we see that $\dim J'_s-\dim J_s=\Dep_{k-3}(\Gamma'')$ and $\dim J''_{k-3}-\dim J_{k-3}=\Dep_{s}(\Gamma')$.  It follows that 
\begin{equation*}\label{EQN Ind+Dep2}-\Dep_s(\Gamma') \geq   \dim J'_s- \cx_{3,k-3}-\dim J_s+\dim J_{k-3}.\end{equation*}
We now observe that $\Ind_{s}(\Gamma')+\dim J'_s=\dim \cx_{3,s}$ and we have
$$\Ind_s(\Gamma')-\Dep_s(\Gamma') \geq \Ind_s(\Gamma)-\Ind_{k-3}(\Gamma).$$
We now apply Theorem CB7 for the final time to note that $\Ind_{k-3}(\Gamma)=\Dep_s(\Gamma)$ and the Theorem follows.
\end{proof}

\begin{comment}
Let's analyze the equality case of \begin{equation*}\label{EQN Ind+Dep1}
\dim \cx_{3,k-3}+1 \geq  \dim J''_{k-3}+\dim I'_{k-3}.\end{equation*}
We know that $\dim \cx_{3,k-3}- \dim J''_{k-3}=\Ind_{k-3}(\Gamma'')$. Therefore we are looking at the case $\Ind_{k-3}(\Gamma'')-\Dep_{k-3}(\Gamma'')=0$. Looking at $\Gamma''$ as a subscheme of $p \cap r$ we see that  $\Ind_{k-3}(\Gamma'')-\Dep_{k-3}(\Gamma'') \geq 0$... nothing new.

$\Dep_{k-3}(\Gamma'')=\dim I'-1$ and $\Dep_{k-3}(G')=\dim J''-1$. So for equality we need  
\end{comment}
\section{Proofs of Dimension Results}

We begin by deriving a codimension bound from Theorem \ref{THM Master}.

\begin{theorem}\label{THM mainrank}
Let $S$ be a subspace of $\cx_{3,d}$ with $d\geq 3$, such that $\V(S)=\emptyset$. Let $I=I(S)$ be the ideal generated by $S$ and suppose that $\codim I_{2d} \geq 1$. Then $$\codim I_d \geq 3d-2.$$ 
The bound is tight when $S$ is the degree $d$ part of the complete intersection of a cubic and two forms of degree $d$.
\end{theorem}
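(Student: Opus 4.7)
The plan is to apply the Master Theorem (Theorem~\ref{THM Master}) to a suitable subscheme of a complete intersection cut out from $S$. Since $\V(S)=\emptyset$, the linear system $|S|$ on $\CP^2$ is base-point-free, and Bertini's theorem produces $p,q\in S$ whose common zero locus $\Gamma\subset\CP^2$ is a reduced set of $d^2$ distinct points. A standard Hilbert-series computation for the complete intersection $(p,q)\subset\cx_3$ gives $\Ind_d(\Gamma)=\binom{d+2}{2}-2$ and $\Dep_d(\Gamma)=\binom{d-1}{2}$, so $\Ind_d(\Gamma)-\Dep_d(\Gamma)=3d-2$; this is the target bound.

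Using $\codim I_{2d}\geq 1$, I pick a nonzero functional $\phi\in\cx_{3,2d}^*$ vanishing on $I_{2d}$. Since $(p,q)_{2d}\subseteq I_{2d}$, $\phi$ descends to a functional on $A(\Gamma)_{2d}\cong\CC^\Gamma$, and I may write $\phi=\sum_{v\in\Gamma}\phi_v\delta_v$. Let $\Gamma'=\{v\in\Gamma : \phi_v\neq 0\}$ be its support. The main technical step is to verify $\Dep_d(\Gamma')\geq 1$. For each $s\in S$ and each $r\in\cx_{3,d}$, $sr\in I_{2d}$ gives $0=\phi(sr)=\sum_v\phi_v s(v)r(v)$, so the vector $\psi_s:=(\phi_v s(v))_{v\in\Gamma}$ lies in $W^\perp$, where $W\subseteq\CC^\Gamma$ is the evaluation image of $\cx_{3,d}$. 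Because $\V(S)=\emptyset$, any $v\in\Gamma'$ admits some $s\in S$ with $s(v)\neq 0$, forcing $\psi_s\neq 0$; this $\psi_s\in W^\perp$ is supported in $\Gamma'$ and so gives a non-trivial linear dependence among the evaluation functionals at $\Gamma'$, proving $\Dep_d(\Gamma')\geq 1$. The Master Theorem with $k=s=d\geq 3$ then yields $\Ind_d(\Gamma')-\Dep_d(\Gamma')\geq 3d-2$.

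To convert this into a codimension bound, I introduce the symmetric bilinear form $Q$ on $\cx_{3,d}$ defined by $Q(f,g)=\phi(fg)=\sum_{v\in\Gamma}\phi_v f(v)g(v)$. The condition $\phi|_{I_{2d}}=0$ forces $S\subseteq\ker Q$, so $\rank Q\leq\codim I_d$. On the other hand $Q$ depends only on the evaluations of $f$ and $g$ at $\Gamma'$, hence factors through the evaluation map $\cx_{3,d}\to V\subseteq\CC^{\Gamma'}$ with $\dim V=\Ind_d(\Gamma')$. On $\CC^{\Gamma'}$, $Q$ is the non-degenerate diagonal form with entries $\phi_v$, and the $Q$-orthogonal of $V$ in $\CC^{\Gamma'}$ has dimension $\Dep_d(\Gamma')$; since $\ker(Q|_V)=V\cap V^{\perp_Q}$, we obtain $\rank(Q|_V)\geq\Ind_d(\Gamma')-\Dep_d(\Gamma')\geq 3d-2$. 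Combining the two rank bounds delivers $\codim I_d\geq 3d-2$.

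For tightness I take $S=I_d$ for the complete intersection $I=(q_3,p_1,p_2)\subset\cx_3$ with $\deg q_3=3$ and $\deg p_1=\deg p_2=d$; the Hilbert series $(1-t^3)(1-t^d)^2/(1-t)^3$ yields $\codim I_d=3d-2$ and $\codim I_{2d}=1$, saturating the bound. The hardest step is the verification of $\Dep_d(\Gamma')\geq 1$, where the base-point-free hypothesis $\V(S)=\emptyset$ enters essentially and feeds the structural input into the Cayley--Bacharach machinery of Theorem~\ref{THM Master}.
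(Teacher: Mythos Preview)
Your proof is correct and follows essentially the same route as the paper's: Bertini gives a transverse $p,q\in S$, the functional $\phi$ annihilating $I_{2d}$ is written as a combination of point evaluations on $\Gamma=p\cap q$, the associated bilinear form $Q(f,g)=\phi(fg)$ has $S$ in its kernel, and the Master Theorem bounds $\rank Q$ from below by $\Ind_d(\Gamma')-\Dep_d(\Gamma')\ge 3d-2$. Your verification of $\Dep_d(\Gamma')\geq 1$ (constructing the explicit dependence $\psi_s$) and your rank computation via $V\cap V^{\perp_Q}$ are more detailed than the paper's one-line justifications of the same facts, but the argument is the same.
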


\begin{proof}%[Proof of Theorem \ref{THM mainrank}]
Since $\codim I_{2d} \geq 1$ there exists a linear functional $\ell: \cx_{3,2d} \rightarrow \CC$ vanishing on $I_{2d}$. By applying Bertini's theorem we know that there exist two forms $p,q \in S$ such that they intersect transversely in $d^2$ points $\Gamma=\{\gamma_1,\dots,\gamma_{d^2}\} \subset\CP^2$. Let $z_1,\dots,z_{d^2}$ be affine representatives for the points $\gamma_i$. Since the ideal generated by $p$ and $q$ is radical, we know that we can write $\ell$ as a linear combination of points evaluations on the points $z_i$:  $$\ell(f)=\sum_{i=1}^{d^2}\mu_if(z_i)\hspace{5mm} \text{for all} \hspace{5mm} f \in \cx_{3,2d}.$$

Now define a symmetric bilinear $\bl$ on $\cx_{3,d}$ given by $\bl(f,g)=\ell(fg)=\sum_{i=1}^{d^2} \mu_if(z_i)g(z_i).$ It follows that $S$ is contained in the kernel of $\bl$. Let $\Gamma'$ be the subvariety of $\Gamma$ consisting of points $\gamma_i$ for which the coefficient $\mu_i$ is not zero. Evaluation at any non-zero point of $\CC^3$ leads to a rank $1$ symmetric bilinear form. Therefore it follows that $$\rank \bl \geq \Ind_d(\Gamma')-\Dep_{d}(\Gamma').$$

We observe that if $\Dep_d(\Gamma')=0$, then the forms in the kernel of $\bl$ must vanish on all of $\Gamma'$, and therefore forms in $S$ will have a common projective zero. Thus $\Dep_d(\Gamma')\geq 1$ and we can apply Theorem \ref{THM Master}. We see that $\Ind_d{\Gamma'}-\Dep_{d}(\Gamma') \geq \dim \cx_{3,d}-2-(d^2-\dim \cx_{3,d}-2)=3d-2$. Since $S \subseteq \ker \bl$, we see that $\codim S \leq \rank \bl$ and the inequality of the Theorem follows.

The complete (empty) intersection of a form of degree $3$ and two forms of degree $d$ generates a Gorenstein ideal with socle of degree $2d$. Therefore the conditions of the Theorem are satisfied. The degree $d$ part of the ideal has dimension $\binom{d-1}{2}+2$ and thus codimension $3d-2$ in $\cx_{3,d}$.

 \end{proof}

We remark that the idea of representing linear function $\ell$ in terms of point evaluations is well known, and contained for instance in the Apolarity Lemma of \cite{Apolarity}, where necessary and sufficient conditions are provided. We can straightforwardly generalize Theorem \ref{THM mainrank} to more than $3$ variables, while also adding the case of degree $2$ to obtain Theorem \ref{THM mainrank2}.
 
%\begin{theorem}\label{THM mainrank2}
%Let $S$ be a subspace of $\cx_{n,d}$ with $n,d \geq 3$ such that $\V(S)=\emptyset$. Let $I=I(S)$ be the ideal generated by $S$ and suppose that $\codim I_{2d} \geq 1$. Then $$\codim I_d \geq 3d-2.$$ 
%The bound is tight when $S$ is the degree $d$ part of the complete intersection of a form of degree $3$ and two forms of degree $d$ in $\CP^2$.

%Let $S$ be a subspace of $\cx_{n,2}$ with $n \geq 4$ such that $\V(S)=\emptyset$. Let $I=I(S)$ be the ideal generated by $S$ and suppose that $\codim I_{4} \geq 1$. Then $$\codim I_2 \geq 6.$$ 
%The bound is tight when $S$ is the complete intersection of $4$ quadrics in $\CP^3$.

%\end{theorem} 
 
\begin{proof}[Proof of Theorem \ref{THM mainrank2}]
We prove the first statement by induction on $n$. The base case $n=3$ is Theorem \ref{THM mainrank}. Let $S$ be a subspace of $\cx_{n+1,d}$ such that $\V(S)=\emptyset$ and $\codim I_{2d}(S) \geq 1$. Suppose that $\codim S < 3d-2$.

Let $W$ be the intersection of $S$ with a copy of $\cx_{n,d}$ obtained by removing $x_{n+1}$. Then $W$ is a subspace of $\cx_{n,d}$ and $\operatorname{codim} W < 3d-2$.

We claim that $\V(W)=\emptyset$. Suppose not, and $p(z)=0$ for some non-zero $z=(z_1,\dots,z_n) \in \mathbb{C}^{n}$ and all $p \in W$. Then $f(z_1,\dots,z_n,0)=0$ for all $f \in S$, which is a contradiction. By the induction assumption we must have $W=\cx_{n,d}$. This argument applies to exclusion of any variable $x_k$. Therefore $S$ contains all monomials of degree $d$ containing at most $n$ variables.  Let $q \in \cx_{n+1,2d}$ be a monomial. We can write $q$ as a product of two monomials of degree $d$, each of which uses only $n$ variables and thus $q \in I_{2d}(S)$. Hence we see that $I_{2d}(S)=\cx_{n+1,2d}$, which is a contradiction.

For the second statement we need to provide the base case $n=4$. Since we have $\V(S)=\emptyset$ we know that $S$ contains a complete intersection of $4$ quadrics. But this complete intersection already generates a Gorenstein ideal with socle of degree $4$. Therefore it follows that $S$ must be equal to the linear span of the four quadrics, and thus $S$ has codimension $6$ in $\cx_{4,4}$. The proof now proceeds by induction in exactly the same way.

\end{proof}

We show the following characterization of maximal positive Gorenstein ideals.

\begin{prop}\label{PROP Maximal}
Let $I$ be a positive Gorenstein ideal with socle $\ell$ of degree $2d$. Then $I$ is maximal if and only if $I_d$ generates $I_{2d}$.
\end{prop}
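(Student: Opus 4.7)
The plan is to translate the maximality of $I$ (that $\ell$ spans an extreme ray of $\sq^*$) into a decomposability question inside $\sq^*$, and then to exploit the Cauchy--Schwarz property of the positive semidefinite bilinear form $\bl(f,g)=\ell(fg)$ together with the fact that $I_{2d}=\ker\ell$ has codimension $1$ in $\rx_{2d}$.

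For the implication ``$I_d$ generates $I_{2d}\Rightarrow I$ maximal'', I would suppose $\ell=\ell_1+\ell_2$ with $\ell_1,\ell_2\in\sq^*$. The key move is that $Q_{\ell_1}+Q_{\ell_2}=\ql$ is a sum of two positive semidefinite quadratic forms, forcing $\ker\ql\subseteq\ker Q_{\ell_i}$, i.e.\ $I_d\subseteq I_d(\ell_i)$. Cauchy--Schwarz applied to the positive semidefinite bilinear form $B_{\ell_i}$ then gives $\ell_i(pq)=0$ for every $p\in I_d$ and $q\in\rx_d$, so $\ell_i$ vanishes on $I_d\cdot\rx_d$, which by hypothesis equals $I_{2d}=\ker\ell$. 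Since $\ker\ell$ has codimension $1$ in $\rx_{2d}$, each $\ell_i$ must be a scalar multiple of $\ell$, proving $\ell$ is extreme.

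For the converse I would argue the contrapositive: if $I_d\cdot\rx_d\subsetneq I_{2d}$, then the annihilator $(I_d\cdot\rx_d)^\perp\subseteq\rx_{2d}^*$ has dimension at least $2$, so I can choose $\ell_0'$ in it not proportional to $\ell$. The crucial observation is that, because $\ell_0'(pq)=0$ for all $p\in I_d$ and $q\in\rx_d$, the bilinear form associated to $Q_{\ell_0'}$ has $I_d$ in its radical, and so $Q_{\ell_0'}$ descends to a symmetric bilinear form on the finite-dimensional quotient $V=\rx_d/I_d$. Since $\ql$ descends to a positive definite inner product on $V$, and positive definiteness is an open condition, for sufficiently small $\epsilon>0$ both $\ql\pm\epsilon Q_{\ell_0'}$ are positive semidefinite on $\rx_d$. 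Thus $\ell\pm\epsilon\ell_0'\in\sq^*$, and the identity $\ell=\tfrac{1}{2}\bigl((\ell+\epsilon\ell_0')+(\ell-\epsilon\ell_0')\bigr)$ is a nontrivial decomposition in $\sq^*$, contradicting extremality.

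The main technical obstacle is ensuring in the converse direction that the radical of $Q_{\ell_0'}$ contains all of $I_d$ (not merely that $Q_{\ell_0'}$ vanishes pointwise as a quadratic form on $I_d$); this strict radical containment is exactly what permits the perturbation argument to be carried out on the quotient $V$, and it is supplied precisely by the assumption that $\ell_0'$ annihilates the full product $I_d\cdot\rx_d$, not merely the squares of elements of $I_d$. This is also why one has to pick $\ell_0'$ from $(I_d\cdot\rx_d)^\perp$ rather than from $(I_{2d})^\perp$, making the codimension strict-inequality $I_d\cdot\rx_d\subsetneq I_{2d}$ the truly essential hypothesis.
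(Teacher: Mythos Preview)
Your proof is correct. Both directions are sound: the forward direction correctly uses that a sum of positive semidefinite forms has kernel equal to the intersection of kernels, and then that the kernel of a positive semidefinite form equals its radical; the converse is a clean perturbation argument on the quotient $V=\rx_d/I_d$, and your emphasis that $\ell_0'$ must be chosen in $(I_d\cdot\rx_d)^\perp$ (so that $I_d$ lies in the \emph{radical} of $Q_{\ell_0'}$, not just in its zero locus) is exactly the point that makes the descent to $V$ legitimate.

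The route differs from the paper's. The paper does not argue directly with decompositions in $\sq^*$; instead it invokes Lemma~2.2 of \cite{GB2012}, which says that $\ell$ is extreme in $\sq^*$ if and only if $I_d(\ell)$ is maximal by inclusion among degree-$d$ parts of Gorenstein ideals with socle of degree $2d$, and then shows that this maximality-by-inclusion is equivalent to $I_d$ generating $I_{2d}$. Your argument is self-contained and bypasses this intermediate characterization entirely, at the cost of having to carry out the positive-semidefinite perturbation explicitly. In effect you have re-proved the relevant content of the cited lemma inline. The paper's version is shorter but black-boxes the convex-geometric step; yours makes that step transparent and would stand on its own without reference to \cite{GB2012}.
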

\begin{proof}
We recall that by Lemma 2.2 of \cite{GB2012}, a positive Gorenstien ideal $I$ with socle of degree $2d$ is maximal if and only if $I_d$ is maximal over all Gorenstein ideals with socle of degree $2d$.

Suppose that $I$ is a positive Gorenstein ideal with socle $\ell \in \h^*$, and $I_{d}$ generates $I_{2d}$. Then $I_d$ is maximal, since $I_d$ uniquely determines $\ell$, and $I_d$ already includes all forms $p \in \hd$ such that $\ell(pq)=0$ for all $q \in \hd$. Therefore $I$ is a maximal positive Gorenstein ideal.

Now suppose that $I$ is a maximal positive Gorenstein ideal, but $I_d$ does not generate $I_{2d}$. Let $J$ be the ideal generated by $I_d$. Then there exist two linearly independent functionals $\ell_1$ and $\ell_2$ vanishing on $J_{2d}$. When we take Gorenstein ideals with socles $\ell_1$ and $\ell_2$ we see that they both include $I_d$ and therefore $I_d$ is not maximal, which is a contradiction.
\end{proof}

We note that the condition that $I_d$ generates $I_{2d}$ does not imply that $I$ is generated in degree $d$; it is possible for $I$ to have additional generators of degree greater than $d$, while $I_d$ generates $I_{2d}$. We are now ready to prove Theorem \ref{THM Main Rank}.
%Theorem \ref{THM mainrank2} has the following immediate corollary;

%\begin{cor}\label{COR Rank}
%Let $I$ be a  Gorenstien ideal in $\cx$ with socle of degree $2d$ and $n,d \geq 3$. If $\V(I_d)=\emptyset$ then $\codim I_d \geq 3d-2$. The bound is tight when $I$ is the complete intersection of a form of degree $3$ and two forms of degree $d$ in $\CP^2$.

%Let $I$ be a  Gorenstien ideal in $\cx$ with socle of degree $4$ and $n \geq 4$. If $\V(I_2)=\emptyset$ then $\codim I_4 \geq 6$. The bound is tight when $I$ is the complete intersection of $4$ quadrics in $\CP^3$.
%\end{cor}
%APPLY TO POSITIVE GORENSTEIN IDEALS

\begin{proof}[Proof of Theorem \ref{THM Main Rank}] 
The fact that the forms in $I_d$ have no common zeroes, real or complex follows from \cite{GB2012}, Corollary 2.3. Now we can apply Theorem \ref{THM mainrank2} to $I_d$ and the dimensional conclusion follows. The statement about maximal positive Gorenstein ideals was proved in Proposition \ref{PROP Maximal}.

To show that the bounds are tight let $\Gamma=\{\gamma_1,\dots,\gamma_{3d}\}$ be a fully real transverse intersection in $\RR\PP^2$ of a cubic $f \in \rx_{3,3}$ and a form $g \in \rx_{3,d}$ of degree $d$. Let $v_1,\dots,v_{3d}$ be affine representatives of the points $\gamma_i$. By Theorem CB7 of \cite{EGH} there exists a unique linear relation for forms in $\hd$ evaluated on the points $v_i$:
$$u_1f(v_1)+\dots+u_{3d}f(v_{3d})=0 \hspace{7mm} \text{for all} \hspace{5mm} f \in \hd,$$
and furthermore all coefficients $u_i \in \RR$ are nonzero. We can now apply exactly the same methods as in Theorem 6.1 of \cite{GB2012}. It follows that any linear functional $\ell \in \rx_{3,2d}^*$ given by $$\ell(f)=\sum_{i=1}^{3d}a_if(v_i) \hspace{7mm} \text{with all} \hspace{2mm} a_i>0 \hspace{2mm} \text{except one}\hspace{3mm}  \text{and} \hspace{2mm} \sum_{i=1}^{3d}\frac{u_i^2}{a_i}=0$$ defines an extreme ray of $\Sigma_{3,2d}^*$, and $\ell \notin P_{3,2d}^*$. We note that $\ell$  is a socle of a maximal positive Gorenstein ideal $I(\ell)$, which is the complete intersection of a cubic with two forms of degree $d$. We can extend $\ell$ to be a linear functional on $\h$ by adding $0$ coordinates to points $v_i$ and the corresponding $\ell$ will still define a maximal positive Gorenstein ideal.

For the case $d=2$, we take a fully real transverse intersection of $3$ quadrics in $\RR\PP^3$ and follow the same construction. This case was explicitly discussed in \cite{GB2012} and the same construction as above will yield an extreme ray of $\Sigma_{4,4}^*$ that is not a point evaluation. The linear functional $\ell$ can again be extended to $\rx_{n,4}$.

\end{proof}
 
Corollary \ref{COR Extreme Rays} immediately follows from Theorem \ref{THM Main Rank}, since $\rank \ql=\codim I_d(\ell)$.

\begin{proof}[Proof of Theorem \ref{THM Moments}]
We know that any $\ell \in \sq^*$ can be written as sum of the extreme rays of $\sq^*$. The kernel of the moment matrix $M_{\ell}$ is equal to $I_d(\ell)$. By Corollary \ref{COR Extreme Rays} the only extreme rays of $\sq^*$ that have moment matrices of rank at most $3d-3$ are point evaluations. Therefore, if $\rank M_{\ell} \leq 3d-3$ then $M_{\ell}$ is a sum of point evaluations. Now suppose that $\ell=\sum \ell_{v_i}$ and $M_{\ell}=\sum M_{v_i}$, where $\ell_{v_i}$ is point evaluation on $v_i \in \RR^n$ and $M_{v_i}$ is its moment matrix. It follows that $\ker M \subset \ker M_{v_i}$ for all $i$. Therefore we can find $\lambda_1$ such that $M_1=M-\lambda_1M_{v_1}$ is positive semidefinite and $\rank M_1=\rank M-1$. Then $M_1$ is again a sum of point evaluations and we can keep reducing the rank of $M_i$ until we obtain a decomposition of $M$ as a sum of exactly $\rank M$ point evaluations.

\end{proof}

\begin{proof}[Proof of Theorem \ref{THM Tensors}] Theorem \ref{THM Tensors} is essentially a restatement of Theorem \ref{THM Moments}. The only claim that is left to be shown is that the bounds are tight: there exist forms $f \in \h$ with positive semidefinite quadratic form $Q_f$ of rank $3d-2$ for $d\geq 3$, $n \geq 3$ and rank $6$ for $d=2$, $n \geq 4$,  such that the real Waring rank of $f$ is strictly greater than $\rank Q_f$.

We take $f \in \h$ such that $\partial f$ is an extreme ray of $\sq^*$ with rank $Q_f=3d-2$ for $d\geq 3$ or rank $6$ for $d=2$, which we know exist by Corollary \ref{COR Extreme Rays}. Since $Q_f$ is positive semidefinite, we know that any decomposition of $\partial f$ as a sum point evaluations must include at least $\rank Q_f$ point evaluations with positive signs.
But $\partial f$ is not a linear combination of point evaluations with positive coefficients, since $\partial f$ is an extreme ray of $\sq^*$ and therefore any decomposition of $f$ as $2d$-th powers of linear forms must include negative signs and we have more than $\rank Q_f$ powers. 
\end{proof}

\section{Positive Forms Contained in Positive Gorenstein Ideals}

We now begin the investigation of positive forms contained in positive Gorenstein ideals. Let $\ell \in \h^*$ be the socle of a positive Gorenstein ideal $I(\ell)$. Suppose that $I(\ell)$ contains a strictly positive form $p$ of degree $2k$. Let $C_p$ be the cone of linear functionals $m$ in $\Sigma_{n,2d}^*$ such that $p$ is contained in $I(m)$:
$$C_p=\{m \in \Sigma_{n,2d}^* \st p \in I(m)\}.$$ 
The condition $p \in I(m)$ is equivalent to $m(pq)=0$ for all $q \in \rx_{n,2d-2k}$.
Therefore the cone $C_p$ is the section of $\Sigma_{n,2d}^*$ with the subspace $L_p$ of $\rx_{n,2d}^*$ consisting of linear functionals vanishing on $p \cdot \rx_{n,2d-2k}$: $$C_p=\Sigma_{n,2d}^* \cap L_p.$$ Thus $C_p$ is a closed convex cone and it contains a nontrivial  linear functional $\ell$. We consider extreme rays of $C_p$.

Recall that to a linear functional $\ell \in \rx_{n,2d}^*$  we can associate a quadratic form $Q_{\ell}$ on $\rx_{n,2d}$ given by $$Q_{\ell}(g)=\ell(g^2) \hspace{3mm} \text{for} \hspace{3mm} g \in \rx_{n,d}.$$
It follows that a linear functional $\ell$ is in $\Sigma_{n,2d}^*$ if and only if the form $Q_{\ell}$ is positive semidefinite.

\begin{lemma}\label{Lemma extreme rays}
Let $\ell$ be an extreme ray of $C_p$ and let $W \subset \rx_{n,d}$ be the kernel of $Q_{\ell}$. Then $p$ and $W$ have no common zeroes (real or complex), i.e. $\V_{\CC}(p,W)=\emptyset$.
\end{lemma}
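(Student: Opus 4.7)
My plan is a perturbation argument using the extremality of $\ell$ in $C_p$. Suppose for contradiction that $z \in \CC^n \setminus \{0\}$ satisfies $p(z)=0$ and $g(z)=0$ for every $g \in W$. Because $p$ and the elements of $W$ have real coefficients, $\bar z$ is also a common zero. Since $p$ is strictly positive it has no nonzero real zeroes, so $z$ is not a scalar multiple of a real vector; in particular $z$ and $\bar z$ represent distinct points of $\CP^{n-1}$, and if $z \neq 0$ the two real-valued functionals on $\rx_{n,2d}$
$$\mu_1(f) = f(z)+f(\bar z), \qquad \mu_2(f) = i\bigl(f(z)-f(\bar z)\bigr)$$
cannot simultaneously vanish.

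First I would check that each $\mu_j$ lies in $L_p$: since $p(z)=p(\bar z)=0$, $\mu_j(pq)=0$ for every $q \in \rx_{n,2d-2k}$, so $\ell \pm \epsilon \mu_j \in L_p$ for every $\epsilon \in \RR$. The core step is to show $\ell \pm \epsilon \mu_j \in \sq^*$ for all sufficiently small $\epsilon>0$. Fix a Euclidean inner product on $\rx_{n,d}$ and decompose $g = g_W + g_\perp$ with $g_W \in W$ and $g_\perp \in W^\perp$. Applying Cauchy--Schwarz to the PSD form $Q_\ell$ gives $\ell(g_W h)=0$ for every $h \in \rx_{n,d}$, so $Q_\ell(g)=Q_\ell(g_\perp)$; and because $z$ and $\bar z$ annihilate $W$, also $g(z)=g_\perp(z)$ and $g(\bar z)=g_\perp(\bar z)$. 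On $W^\perp$ the form $Q_\ell$ is positive definite with smallest eigenvalue $\lambda > 0$, while $|\mu_j(g^2)| \leq C\|g_\perp\|^2$ for a constant $C$ depending only on $z$. Choosing $0 < \epsilon < \lambda/(2C)$ yields $Q_{\ell \pm \epsilon \mu_j}(g) \geq 0$ for every $g$, and hence $\ell \pm \epsilon \mu_j \in C_p$.

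Thus $\ell = \tfrac12(\ell + \epsilon\mu_j)+\tfrac12(\ell - \epsilon\mu_j)$ is a convex decomposition inside $C_p$. If either $\mu_1$ or $\mu_2$ is not a scalar multiple of $\ell$, this contradicts extremality. The remaining case — both $\mu_1$ and $\mu_2$ are scalar multiples of $\ell$ — is the main obstacle. I would handle it by noting that $\tfrac12(\mu_1 - i\mu_2)$ is exactly the complex point evaluation $\ell_z(f) = f(z)$, so in this situation $f(z)$ lies on a single real line through the origin in $\CC$ for every $f \in \rx_{n,2d}$. After multiplying $z$ by an appropriate complex unit (which is harmless since the common-zero condition is preserved under scaling) we may assume this line is $\RR$, so $f(z) = \overline{f(z)} = f(\bar z)$ for all real forms $f$ of degree $2d$. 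Since degree-$2d$ forms separate points of $\CP^{n-1}$, this forces $z$ and $\bar z$ to define the same projective point, and a short unit-modulus computation then shows $z$ is a scalar multiple of a real vector. That yields a real zero of the strictly positive form $p$, a contradiction. The only nontrivial part of this plan is the last paragraph; the rest is a direct semidefinite perturbation estimate.
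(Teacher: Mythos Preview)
Your argument is correct and complete; the perturbation bound, the Cauchy--Schwarz step identifying the kernel of the bilinear form, and the final reduction showing $z$ must be proportional to a real vector are all sound.

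The paper takes a related but shorter route. Instead of perturbing by $\mu_1$ and $\mu_2$ and bounding eigenvalues, it quotes a lemma from \cite{GB2012} stating that for an extreme ray $\ell$ of a linear section of $\sq^*$, the kernel $W$ of $Q_\ell$ is \emph{maximal}: any $s\in L_p$ with $W\subseteq\ker Q_s$ must be a scalar multiple of $\ell$. It then applies this directly to the single functional $s(f)=\operatorname{Re} f(z)$ (which is your $\tfrac12\mu_1$), concludes $\ell=\lambda s$, and observes that $Q_\ell(g)=\lambda\operatorname{Re} g(z)^2$ is indefinite. So the paper skips both your explicit perturbation estimate (that is exactly what the cited maximality lemma packages) and your endgame: rather than arguing that $\mu_1,\mu_2\propto\ell$ forces $z$ to be a scalar multiple of a real vector, it simply notes that $\operatorname{Re} g(z)^2$ takes both signs. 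Your version is more self-contained since it avoids the external citation, and your treatment of the degenerate case (both $\mu_j$ proportional to $\ell$) is more careful than the paper's, which tacitly assumes $s\neq 0$ and that $g\mapsto g(z)$ hits values with $\operatorname{Re} g(z)^2<0$. The paper's version is terser; yours trades brevity for not needing \cite{GB2012}.
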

\begin{proof}
Let $\ell$ and $W \subset \rx_{n,d}$ be as above. Since $p$ is strictly positive it follows that $p$ and forms in $W$ have no common real zeroes. Suppose that there is a common complex zero $z$. We recall that $C_p$ is a section of $\Sigma_{n,2d}^*$ with the subspace $L_p$. It follows that the kernel $W$ of $Q_{\ell}$ is strictly maximal among all the linear functionals in $L_p$, (\cite{GB2012}, Lemma 2.2). In other words, if $s \in L_p$ and $W \subseteq \ker Q_s$ then $\ell=\lambda s$ for some $\lambda \in \RR$.

Now let $s \in \rx_{n,2d}^*$ be given by $s(f)=\operatorname{Re} f(z)$, the real part of $f(z)$. Then $s \in L_p$ and $W \subseteq \ker Q_s$. Therefore $\ell(f)=\lambda \operatorname{Re} f(z)$ for some $\lambda \in \RR$. Thus $Q_{\ell}(g)=\lambda \operatorname{Re} g^2(z)$ for $g \in \rx_{n,d}$. It follows that $Q_{\ell}$ is not positive semidefinite, and we arrive at a contradiction.
\end{proof}

The following is a generalization of Lemma 2.8 of \cite{GB2012}.

\begin{lemma}\label{LEMMA Bertini}
Suppose that $p\in \rx_{n,k}$ and $q_1,\ldots,q_{n-1} \in \hd$ together form a sequence of parameters and $n \geq 3$. Then there exist $f_1, \ldots,f_{n-2}$ in the real linear span of $q_i$ such that the forms $p, f_1, \ldots, f_{n-2}$ intersect transversely in $kd^{n-2}$ (possibly complex) points.
\end{lemma}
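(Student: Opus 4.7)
The plan is to reduce to a standard Bertini argument over $\CC$ and then transfer to the reals. Write $L$ for the real span of $q_1,\ldots,q_{n-1}$ and $L_\CC$ for its complexification inside $\cx_{n,d}$. It will suffice to exhibit a nonempty Zariski-open subset $U\subset L_\CC^{\,n-2}$, cut out by polynomial conditions with real coefficients, consisting of tuples $(f_1,\ldots,f_{n-2})$ whose scheme-theoretic intersection with $p$ in $\CP^{n-1}$ is reduced and $0$-dimensional; by Bezout, such an intersection automatically has length $kd^{n-2}$. Since the complement of $U$ is then a proper real algebraic subset of $L^{n-2}\cong\RR^{(n-1)(n-2)}$, it has empty interior, so the real points of $U$ are Euclidean-dense and in particular nonempty.

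For the first step, $0$-dimensionality, I would use that a regular sequence of homogeneous elements has regular initial segments; in particular $p,q_1,\ldots,q_{n-2}$ already cuts out a $0$-dimensional subscheme of $\CP^{n-1}$. Since $0$-dimensionality of the intersection is an open condition on coefficients, the locus $U_1\subset L_\CC^{\,n-2}$ of tuples with $\dim\V(p,f_1,\ldots,f_{n-2})=0$ is a nonempty Zariski-open subset, defined over $\RR$.

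For transversality, note that $\V(p,q_1,\ldots,q_{n-1})=\emptyset$ implies that the restriction of $L_\CC$ to the hypersurface $\V(p)\subset\CP^{n-1}$ is a base-point-free linear system. Applying Bertini's theorem iteratively, a generic $f_1\in L_\CC$ cuts $\V(p)$ in a subscheme smooth away from $\operatorname{Sing}\V(p)$, and subsequent generic choices $f_2,\ldots,f_{n-2}$ miss the residual singular loci by a dimension count using $\dim\operatorname{Sing}\V(p)\leq n-3$. The resulting scheme is $0$-dimensional and reduced, which is equivalent to the Jacobian of $(p,f_1,\ldots,f_{n-2})$ having full rank $n-1$ at each intersection point, i.e.\ the forms meet transversely. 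The good tuples form a nonempty Zariski-open subset $U_2\subset L_\CC^{\,n-2}$ defined over $\RR$.

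Setting $U=U_1\cap U_2$ and invoking the real density observation from the first paragraph yields the desired real $f_i$. The main obstacle is making the iterated Bertini step precise when $\V(p)$ has singularities of dimension close to $n-3$; the cleanest way is to set up the incidence variety $I=\{(f_1,\ldots,f_{n-2},x)\st x\in\V(p,f_1,\ldots,f_{n-2}),\ \rank \mathrm{Jac}(p,f_1,\ldots,f_{n-2})(x)<n-1\}$, decompose it according to the stratification of $\V(p)$ by singularity type, and verify by a tangent-space count in each stratum that the projection of $I$ to $L_\CC^{\,n-2}$ cannot be dominant.
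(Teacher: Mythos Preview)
Your proposal is correct and follows essentially the same two-step approach as the paper: iterate Bertini over $\CC$ using the base-point-free linear system $L_\CC$ on $\V(p)$, then pass to $\RR$ via the fact that a nonempty Zariski-open set defined over $\RR$ must contain real points (the paper phrases this as ``a real polynomial vanishing on all real inputs is identically zero''). You are in fact more careful than the paper about the singular locus of $\V(p)$, which the paper's iterated Bertini argument simply glosses over; your dimension count and incidence-variety suggestion are sound ways to close that gap.
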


\begin{proof}
Let $W$ be the linear span of $q_1, \ldots, q_{n-1}$ with complex coefficients. We begin by showing that there exist linear combinations $f_1,\ldots, f_{n-2} \in W$ such that $p, f_1, \ldots f_{n-2}$ intersect transversely in $\mathbb{CP}^{n-1}$.

Let $\V_0$ be the projective variety defined by $p$. Then $W$ defines a linear system of divisors on $\V_0$. By Bertini's theorem a general element of $W$ intersects $\V_0$ in a smooth variety of dimension $n-3$. Let $f_1$ be such a form in $W$, let $\V_1$ be the smooth variety defined by $p$ and $f_1$, and let $W_1$ be a subspace of $W$ complementary to $f_1$. Then $W_1$ defines a linear system of divisors on $\V_1$ and by Bertini's Theorem the intersection of $\V_1$ with a general element of $W_1$ is a smooth variety of dimension $n-4$. Let $f_2$ be such an element of $W_1$. Now we can let $\V_2$ be the smooth variety defined by $p$, $f_1$ and $f_2$, let $W_2$ be the complementary subspace to $f_1$ and $f_2$ and repeatedly apply Bertini's Theorem until we get a $0$-dimensional smooth intersection. Hence the forms $p, f_1,\dots,f_{n-2}$ we constructed intersect transversely.

Now we argue that there exist \textit{real} linear combinations $f_1,\ldots, f_{n-2}$ which intersect transversely with $p$. Suppose not and let $f_i=\sum_{j=1}^{n}\alpha_{ij}q_j$. Then for all $\alpha_{ij} \in \mathbb{R}$ the forms $p$ and $f_i$ do not intersect transversely. This is an algebraic conditions on the coefficients $\alpha_{ij}$, given by vanishing of some polynomials in the variables $\alpha_{ij}$. However, if a polynomial vanishes on all real points then it must be identically zero. Therefore, no complex linear combinations of $q_i$ intersect transversely, which is a contradiction.
\end{proof}

%\subsection{Number of Complex Points}\label{NEW SECTION Complex Points}
We recall the following Lemma from \cite{GB2012}, which allows us to limit the number of complex point evaluations that define the socle of a positive Gorenstein ideal.
Let $S$ be a finite set of points in $\co^n$ that is invariant under conjugation: $\bar{S}=S$. Let $S$ be given by $S=\{r_1,\ldots,r_k, z_1,\ldots,z_m, \bar{z}_1,\ldots,\bar{z}_m\}$ with $r_i \in \RR^n$ and $z_i,\bar{z}_i \in \co^n$, $z_i\neq \bar{z}_i$. Let $\ell: \h \rightarrow \RR$ be a linear functional given as a combination of evaluations on $S$:
$$\ell(p)=\sum_{i=1}^k \lambda_ip(r_i)+\sum_{i=1}^m \left(\mu_ip(z_i)+\bar{\mu}_ip(\bar{z}_i)\right),\quad p \in \h$$

\noindent with $\lambda_i \in \RR,$ $\mu_i \in \co$ and $\lambda_i,\mu_i \neq 0$.

%Let $E_{\RR}: \hd \rightarrow \RR^{k+2m}$ be the real evaluation projection of forms in $\hd$ given by
%$$E_{\RR}(p)=\left(p(r_1),\ldots,p(r_k),\operatorname{Re} p(z_1),\operatorname{Im} p(z_1),\ldots,\operatorname{Re} p(z_m),\operatorname{Im} p(z_m)\right), \quad p \in \hd.$$

\begin{lemma}\label{LEMMA conditions}
Suppose that $\ql$ is positive semidefinite, then the number of complex conjugate pairs in $S$ is at most equal to $\Dep_d(S)$.
\end{lemma}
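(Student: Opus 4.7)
The plan is to push $Q_\ell$ forward through the evaluation map and analyze the signature of the resulting quadratic form, then read off the bound via codimension. Consider the $\RR$-linear evaluation map
$$\operatorname{ev}_S : \hd \longrightarrow \RR^k \oplus \CC^m \cong \RR^{k+2m}, \qquad f\mapsto \bigl(f(r_1),\dots,f(r_k),f(z_1),\dots,f(z_m)\bigr),$$
where the values at $\bar z_i$ are recovered by conjugation. Since the complex evaluation map on $\cx_{n,d}$ has image of complex dimension $\Ind_d(S)$, and this image is conjugation-invariant (because $S=\bar S$), its real locus—precisely the image $V$ of $\operatorname{ev}_S$—has real dimension $\Ind_d(S)$ and therefore real codimension $\Dep_d(S)$ in $\RR^{k+2m}$.

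Next, I would expand $Q_\ell(f)=\ell(f^2)$ in these coordinates. The real points contribute the diagonal form $\sum_{i=1}^k \lambda_i f(r_i)^2$, while each conjugate pair contributes $\mu_i f(z_i)^2+\bar\mu_i f(\bar z_i)^2 = 2\operatorname{Re}\bigl(\mu_i f(z_i)^2\bigr)$. Writing $f(z_i)=a_i+ib_i$ and $\mu_i=\alpha_i+i\beta_i$, this equals $2\alpha_i(a_i^2-b_i^2)-4\beta_i a_i b_i$; the corresponding symmetric $2\times 2$ matrix has trace $0$ and determinant $-4|\mu_i|^2<0$, so it is Lorentzian of signature $(1,1)$. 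Consequently the quadratic form $Q$ on $\RR^{k+2m}$ defined by $Q_\ell = Q\circ\operatorname{ev}_S$ has at least $m$ negative eigenvalues, one per conjugate pair.

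The final step is a short signature inequality: if a real quadratic form on $\RR^N$ admits a negative definite subspace of dimension $q$, it cannot be positive semidefinite on any subspace of codimension strictly less than $q$, since the two would meet nontrivially. Since $Q_\ell$ is PSD by hypothesis, $Q$ is PSD on $V$ of codimension $\Dep_d(S)$, and we conclude $m \leq \Dep_d(S)$. The only delicate point is the dimension count for $V$, which rests on the standard fact that a conjugation-invariant complex subspace $W\subseteq \CC^{|S|}$ satisfies $\dim_{\RR} W^{\text{real}}=\dim_{\CC} W$, where $W^{\text{real}}$ denotes the real fixed points of the ambient conjugation (the locus $g(\bar s)=\overline{g(s)}$). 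Beyond that, the argument is pure signature bookkeeping.
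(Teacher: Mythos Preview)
Your argument is correct. The factorization $Q_\ell = Q\circ\operatorname{ev}_S$, the signature computation for each $2\times2$ block (trace zero, determinant $-4|\mu_i|^2<0$), and the final dimension/intersection inequality all check out. The one point worth spelling out a bit more is the dimension of the image $V$: the relevant conjugation on $\CC^{|S|}$ is $\sigma(v)_s=\overline{v_{\bar s}}$, and one verifies that $\sigma(\operatorname{ev}(g))=\operatorname{ev}(\bar g)$, so the image $W$ of $\cx_{n,d}$ is $\sigma$-invariant and $V=W^\sigma$; then the standard identity $\dim_\RR W^\sigma=\dim_\CC W=\Ind_d(S)$ gives the codimension $\Dep_d(S)$ you need. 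You state this correctly, but a referee might ask for that sentence.

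As for comparison with the paper: the paper does not prove this lemma at all---it is simply recalled from \cite{GB2012} without argument. Your proof is self-contained and is the natural signature argument one would expect; it is in the same spirit as the techniques of \cite{GB2012}, where such functionals are analyzed via the quadratic form $Q_\ell$ and its kernel. There is nothing to contrast here beyond noting that you have supplied what the present paper omits.
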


\section{Proofs of Positive Forms Theorems}

%Our main goal in this section is the following theorem:

%\begin{theorem}\label{THM Pos Gorenstein2}
%Let $I$ be a positive Gorenstein ideal in $\RR[x_1,x_2,x_3]$ with socle $\ell$ of degree $4d$.%(resp. $4d+2$). 
%Then $I$ does not contain a strictly positive form of degree $2d+2$. %(resp ??).
%\end{theorem}
With the preparatory work in the previous section we are in position to prove Theorem \ref{THM Pos Gorenstein}.
\begin{proof}[Proof of Theorem \ref{THM Pos Gorenstein}.]
The case $d=1$ is straightforward since we $4d=2d+2=4$ and $\Sigma_{3,4}=P_{3,4}$ and therefore any linear functional $\ell \in \Sigma_{3,4}^*$ cannot vanish on a strictly positive form of degree $4$. We now consider $d \geq 2$.

Suppose not and let $p$ be a strictly positive polynomial of degree $2d+2$ contained in $I$. Then by Lemma \ref{Lemma extreme rays} we may assume that $p$ and $I_{d}$ have no common zeroes. Therefore, by Lemma \ref{LEMMA Bertini} we can find a form $q \in I_d$ such that  $p$ and $q$ intersect transversely in $(2d+2)2d$ points. Since $p$ is strictly positive on $\rn$ we know that the intersection of $p$ and $q$ consists entirely of complex points.

Let $k=d(2d+2)$ and let $\Gamma=\{\gamma_1,\dots,\gamma_{2k}\} \subset \CP^2$ be the intersection of $p$ and $q$. Let $z_i$ be affine representatives of $\gamma_i$. Since points $\gamma_i$ are strictly complex we can choose $z_i$ in conjugate pairs. %Since $p,q$ are a complete intersection it follows that the ideal generated by $p$ and $q$ is radical. 
%Therefore 
As before we can express the linear functional $\ell$ as a linear combination of point evaluations on $z_i$: $$\ell(f)=\sum_{i=1}^k \left( \mu_if(z_i)+\bar{\mu}_if(\bar{z}_i)\right) \hspace{5mm} \text{for all} \hspace{5mm} f \in \h.$$
The coefficients $\mu_i$ can be chosen in conjugate pairs since $\ell$ is a real functional. Let $\Gamma'$ be the subset of $\Gamma$ corresponding to nonzero coefficients $\mu_i$. We know by Lemma \ref{LEMMA conditions} that the $\Dep_d(\Gamma')$ is at least $\frac{1}{2}|\Gamma'|$. It follows that $\Dep_d(\Gamma') \geq 1$ and we can apply Theorem \ref{THM Master} to $\Gamma'$. We see that $\Ind_d(\Gamma')-\Dep_d(\Gamma') \geq \binom{2d+2}{2}-\binom{2d+1}{2}-1=2d$. Since $\Ind_d(\Gamma')+\Dep_d(\Gamma')=|\Gamma'|$ and $\Dep_d(\Gamma') \geq \frac{1}{2}|\Gamma'|$ we reach a contradiction. 

\end{proof}

%\subsection{Hilbert's Theorem on Ternary Forms.} Another proof of Hilbert's Theorem:
%\begin{theorem}
%Let $p$ be a nonnegative form $p \in P_{3,2d}$. Then there exists a nonnegative form $q$ of degree $2d-4$ such that $pq$ is a sum of squares.
%\end{theorem}

We can quickly deduce Theorem \ref{THM Hilbert} from Theorem \ref{THM Pos Gorenstein}.

\begin{proof}[Proof of Theorem \ref{THM Hilbert}]
Suppose not and for some $p \in P_{3,2d}$ we have $pq \notin \Sigma_{3,4d-4}$ for all $q \in P_{3,2d-4}$. Let $T$ be the set of nonnegative forms $f$ of degree $2d$ such that $fg$ is a sum of squares for some nonnegative form $g$ of degree $2d-4$:
$$T=\{f \in P_{3,2d} \st fg \in \Sigma_{3,4d-4} \hspace{3mm} \text{for some} \hspace{3mm} g \in P_{3,2d-4}\}.$$ It is easy to see that the set $T$ is closed. Since $T \subset P_{3,2d}$ there must exist a strictly positive form $p \notin T$.

We have $pq \notin \Sigma_{3,4d-4}$ for all $q \in P_{3,2d-4}$ and therefore $pq \notin \Sigma_{3,4d-4}$ for all non-zero $q \in \rx_{3,2d-4}$. In other words, the linear subspace $p\cdot \rx_{3,2d-4}$ of $\rx_{3,4d-4}$ intersects the convex cone $\Sigma_{3,4d-4}$ only at the origin. It follows that there exists a linear functional $\ell \in \Sigma_{3,4d-4}^*$ such that $\ell$ is zero on $p \cdot \rx_{3,2d-4}$. In other words there exists a positive Gorenstein ideal with socle $\ell$ of degree $4d-4$ which contains a strictly positive form $p$ of degree $2d$. This is a contradiction by Theorem \ref{THM Pos Gorenstein}.
\end{proof}

\subsection{Positive Forms on the Boundary of $\Sigma_{n,2d}$}

Corollary \ref{COR Square} follows immediately from Theorem \ref{THM Pos Gorenstein}: if $p$ is a strictly positive form on the boundary of $\Sigma_{3,4d}$, then $p$ be cannot be a square of a form in $\rx_{3,d}$. In fact we can do better. 

\begin{cor}
Let $p$ be a strictly positive form on the boundary of $\Sigma_{3,2d}$. Then $p$ is a sum of at most $\binom{d+2}{2}-3d+2$ squares and $p$ cannot be written as a sum of fewer than $3$ squares.
\end{cor}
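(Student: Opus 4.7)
The statement splits into an upper bound on, and a lower bound of $3$ on, the number of squares in a shortest SOS representation of $p$. The small cases $d\in\{1,2\}$ are vacuous: by Hilbert's theorem $\Sigma_{3,2d}=P_{3,2d}$, so any form on the boundary has a real zero, contradicting strict positivity (and one checks that $\binom{d+2}{2}-3d+2<3$ for these $d$). From here on I assume $d\geq 3$.

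For the upper bound, the plan is to exploit the trace duality between Gram matrices of $p$ and the catalecticant matrices $Q_{\ell}$. Since $p$ lies on the boundary of $\Sigma_{3,2d}$ there exists a non-zero $\ell\in \Sigma_{3,2d}^*$ with $\ell(p)=0$, and I may pass to an extreme ray of $\Sigma_{3,2d}^*$ in the face perpendicular to $p$. Strict positivity of $p$ excludes $\ell$ from being a point evaluation, so Corollary \ref{COR Extreme Rays} gives $\rank Q_{\ell}\geq 3d-2$. For any PSD Gram matrix $G$ of $p$ in the monomial basis, $\operatorname{tr}(G\cdot Q_{\ell})=\ell(p)=0$; since both matrices are PSD this forces $G\cdot Q_{\ell}=0$, and hence $\operatorname{Im}(Q_{\ell})\subseteq \ker G$. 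Thus $\rank G\leq \binom{d+2}{2}-\rank Q_{\ell}\leq \binom{d+2}{2}-3d+2$, and choosing $G$ of minimal rank gives an SOS representation with at most this many squares.

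For the lower bound, $p=cf^2$ is ruled out by strict positivity, so the real task is to exclude $p=f^2+g^2$. With $\ell$ as above, $0=\ell(p)=\ell(f^2)+\ell(g^2)$ together with $Q_{\ell}\succeq 0$ forces $f,g\in \ker Q_{\ell}=I(\ell)_{d}$. By Theorem \ref{THM Main Rank} the forms in $I(\ell)_d$ have no common complex zero; since $V(f)\cap V(g)\ne\emptyset$ by Bezout, this forces $\dim I(\ell)_d\geq 3$, giving an auxiliary form $h\in I(\ell)_d\setminus \operatorname{span}(f,g)$ with which to maneuver.

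I would now adapt the strategy of the proof of Theorem \ref{THM Pos Gorenstein}. Consider $f'=g+t h$ for a small real $t$: strict positivity of $p=f^2+g^2$ implies that $|g|$ is bounded below on the compact real curve $V_{\RR}(f)$, so for $|t|$ small $f'$ is also nonvanishing on $V_{\RR}(f)$, while a Bertini argument lets us simultaneously arrange $f$ and $f'$ to intersect transversely. The intersection $\Gamma:=V(f)\cap V(f')$ then consists of $d^2$ transverse complex points grouped in conjugate pairs, and because $f,f'\in\ker Q_{\ell}$ the functional $\ell$ vanishes on $(f,f')_{2d}$, so it is representable as a real combination of evaluations on a conjugation-invariant support $\Gamma'\subseteq\Gamma$. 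Lemma \ref{LEMMA conditions} then gives $\Dep_{d}(\Gamma')\geq |\Gamma'|/2$, hence $\Ind_{d}(\Gamma')-\Dep_{d}(\Gamma')\leq 0$. On the other hand, Theorem \ref{THM Master} applied to $f,f'$ with $k=s=d\geq 3$ produces $\Ind_{d}(\Gamma')-\Dep_{d}(\Gamma')\geq \Ind_{d}(\Gamma)-\Dep_{d}(\Gamma)=3d-2>0$, a contradiction. The main technical obstacle is this Bertini step, where one must simultaneously secure transversality of $V(f)\cap V(f')$ and the absence of real intersections; strict positivity of $p$ enters essentially here through the uniform lower bound on $|g|$ along $V_{\RR}(f)$, which a small real perturbation within $I(\ell)_d$ cannot destroy.
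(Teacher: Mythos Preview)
Your argument is correct and follows essentially the same approach as the paper: for the upper bound you pass to an extreme ray $\ell$ with $\ell(p)=0$, use that $\ell$ is not a point evaluation to get $\rank Q_\ell\geq 3d-2$, and bound the rank of any Gram matrix (the paper phrases this directly as $f_i\in I_d(\ell)$ with $\dim I_d(\ell)\leq\binom{d+2}{2}-3d+2$, but the content is identical); for the lower bound you perturb within $I_d(\ell)$ to a transverse, purely non-real intersection and invoke Theorem~\ref{THM Master} together with Lemma~\ref{LEMMA conditions}, exactly as the paper does. The only cosmetic differences are that the paper disposes of odd $d$ by a parity count on $V(f_1)\cap V(f_2)$ before running the perturbation argument for even $d$, and it perturbs \emph{both} $f_1,f_2$ rather than fixing $f$ and moving $g$---your one-sided perturbation is fine, but note that to apply Bertini with $f$ fixed you should also ensure the generic $f'\in I_d(\ell)$ avoids the finitely many singular points of $V(f)$, which is automatic since $V(I_d(\ell))=\emptyset$.
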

\begin{proof}
Let $p$ be a strictly positive form on the boundary of $\Sigma_{3,2d}$ with $d \geq 3$. Then there exists an extreme ray $\ell$ of the dual cone $\Sigma_{3,2d}^*$, such that $\ell(p)=0$, defining a maximal positive Gorenstein ideal $I(\ell)$. Now suppose that $p=\sum f_i^2$ for some $f_i \in \rx_{3,d}$. It follows that $\ql(f_i)=0$ for all $i$, and since $\ql$ is a positive semidefinite quadratic form we see that all $f_i$ lie in the kernel $I_{d}(\ell)$ of $\ql$. By Theorem \ref{THM Main Rank} we know that $\dim I_d(\ell) \leq \binom{d+2}{2}-3d+2$ and the upper bound follows.

%By Theorem \ref{COROLLARY Dimension Kernel} we know that $\dim \wl=n$ and therefore $p$ is a sum of squares of forms coming from a $n$ dimensional subspace of $\h$. It follows that $p$ is a sum of at most $n$ squares.

Now suppose that $p$ is a sum of $2$ squares, $p=f_1^2+f_{2}^2$. Since $p$ is strictly positive we know that the forms $f_i$ have no common real zeroes. Therefore we found two forms $f_i \in I_d(\ell)$ that have no common real zeroes. This is not possible when $d$ is odd. For even $d$ from the proof of Lemma \ref{LEMMA Bertini} we know that $2$ generic forms in $I_d(\ell)$ intersect transversely, and hence we can find forms $f_i' \in \wl$ in a neighborhood of $f_i$  such that $f_i'$ intersect transversely in $d^2$ complex points. This is a contradiction by Theorem \ref{THM Master} and Lemma \ref{LEMMA conditions}.
\end{proof}

We observe that for $d=3$ the upper bound is equal to the lower bound and therefore the bounds are tight. This was discussed in \cite{GB2012} and used in \cite{HCases} to study the algebraic boundary of $\Sigma_{3,6}$. The lower bound is always tight, i.e. we can always find a positive form on the boundary of $\Sigma_{3,2d}$ which is a sum of 3 squares. In fact this happens for any maximal positive Gorenstein ideal $I$ whose socle is not a point evaluation. We know that the forms in $I_d$ have no common zeroes by Theorem \ref{THM Main Rank}. Therefore we can find three forms $f_1,f_2,f_3 \in I_d$ with no common zeroes and $f_1^2+f_2^2+f_3^2$ will be strictly positive and on the boundary of $\Sigma_{3,2d}$.

We remark that the upper bound of $\binom{d+2}{2}-3d+2$ is also sharp for $d=4$, where it is equal to $5$. This can be seen by considering the maximal positive Gorenstein ideals constructed in the proof of Theorem \ref{THM Main Rank}, which come from a complete intersection of a cubic and two quartics. However we expect that the bound is not optimal for higher $d$.

\noindent \textbf{Connection to Semidefinite Programming.} Let $s=\dim \hd$, let $\mathcal{B}=\{f_1,\dots,f_s\}$ be a basis of $\h$ and let $b=(f_1,\dots,f_s)$. For $p \in \rx$ let $c(p)=\{c_1,\dots,c_s\}$ be the vector of coefficients expressing $p$ as a linear combination of $f_i$: $p=\sum_{i=1}^sc_if_i$. The following is a well known connection between sums of squares and positive semidefinite matrices:
\begin{lemma}\label{LEMMA Gram}
Let $p \in \h$. The form $p$ is a sum of squares if and only if there exists a positive semidefinite matrix $M$ such that $p=b^TMb$. Furthermore $p$ is in the interior of $\sq$ if and only if there exists a positive definite $M$ with the above property.
\end{lemma}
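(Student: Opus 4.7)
The plan is to handle the two biconditionals separately, with the standard Gram matrix construction for the first and a perturbation argument built on it for the second. Throughout, fix the basis $b=(f_1,\ldots,f_s)^T$ of $\hd$.

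First I would prove the \emph{sum of squares iff PSD Gram matrix} equivalence. For the forward direction, assume $p=\sum_i q_i^2$ with $q_i\in\hd$. Writing each $q_i$ in the basis as $q_i=v_i^Tb$ with $v_i\in\RR^s$ gives $p=\sum_i (v_i^Tb)^2=b^T\bigl(\sum_i v_iv_i^T\bigr)b$, and $M:=\sum_i v_iv_i^T$ is manifestly PSD. For the converse, if $M\succeq 0$ and $p=b^TMb$, take a spectral decomposition $M=\sum_i v_iv_i^T$ (where $v_i$ are rescaled orthonormal eigenvectors weighted by $\sqrt{\lambda_i}$). Setting $q_i=v_i^Tb\in\hd$, we recover $p=\sum_i q_i^2$.

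Next I would handle the \emph{interior iff PD Gram matrix} equivalence. For the easy direction, suppose $p=b^TMb$ with $M\succ 0$. The linear map $\Phi\colon\mathrm{Sym}_s(\RR)\to\h$ defined by $\Phi(N)=b^TNb$ is surjective onto $\h$ (since $\hd$ spans $\h$ under multiplication), and $\sq=\Phi(\mathrm{PSD}_s)$. Given any $r\in\h$, by surjectivity we can choose $N_r\in\mathrm{Sym}_s$ with $\Phi(N_r)=r$; for sufficiently small $|\varepsilon|$ the matrix $M+\varepsilon N_r$ remains PD (eigenvalues of symmetric matrices vary continuously), and hence $p+\varepsilon r=\Phi(M+\varepsilon N_r)\in\sq$. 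Taking a finite spanning set of directions $r$ and applying this simultaneously shows $p$ has a $\sq$-neighborhood, i.e.\ $p$ is interior to $\sq$.

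Finally, for the hard direction, suppose $p$ lies in the interior of $\sq$. The key observation is that the form $r:=f_1^2+\cdots+f_s^2=b^TIb$ has the identity matrix as a Gram matrix. Since $p$ is interior, there exists $\varepsilon>0$ with $p-\varepsilon r\in\sq$. By the first part of the lemma already proved, we can write $p-\varepsilon r=b^TM'b$ for some $M'\succeq 0$. Then
\[
p=b^T(M'+\varepsilon I)b,
\]
and $M'+\varepsilon I\succ 0$ because $M'\succeq 0$ and $\varepsilon I\succ 0$. This yields the desired PD Gram matrix. The main (mild) obstacle is simply the interior direction; once one notices that the identity matrix is a Gram matrix of a strictly positive element of $\sq$, the rest is immediate.
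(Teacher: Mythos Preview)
Your proof is correct and follows essentially the same route as the paper: both establish the first equivalence via the rank-one/spectral decomposition of a PSD Gram matrix, and both view $\sq$ as the image of the PSD cone under the surjective linear map $M\mapsto b^TMb$. Your treatment of the interior equivalence is more explicit than the paper's---the paper simply invokes that a surjective linear projection sends the interior of a full-dimensional convex cone onto the interior of the image, whereas you unwind this by perturbation for one direction and by the neat observation $b^TIb=\sum_i f_i^2$ for the other---but the underlying idea is the same.
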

\begin{proof}
The key observation is that $M$ is rank one psd matrix, then $M=c(p)c(p)^T$ for some vectors $c(p)$ and therefore $b^TMb=p^2$. Thus if $p=\sum q_i^2$ then we can take $M=\sum c(q)c(q)^T$. On the other hand, any psd matrix can be written as a sum of rank 1 psd matrices, and therefore if $M$ is psd then $p=b^TMb$ is a sum of squares. This shows that $\sq$ is a linear projection of the cone of $s \times s$ psd matrices. We now observe the interior of the cone of $s \times s$ psd matrices consists of positive definite matrices. A linear projection an interior point to the boundary of the image and therefore the second implication follows.
\end{proof}
\section{Multiplier Discussion}

%Take a complete intersection of $n-1$ forms of degree $d$. Multiplier of degree $2k$. Need the number of conditions on degree $2k$ and degree $2k+2d$ to...

%Look at values of degree $2d$, pick a basis. If make the basis equal to 0, then everything is equal to 0. Degree $2k$ can only make basis-1 equal to 0. Therefore natural condition is $\Ind_{2d+2k}(\Gamma)-1\geq \Ind_d(\Gamma)+\Ind_k(\Gamma)-1$, or in other words $\Ind_{2d+2k}(\Gamma)\geq \Ind_{2d}(\Gamma)+\Ind_{2k}(\Gamma)$.

The following is a generalization of the ideas of Hilbert's original method of showing that there exist nonnegative forms that are not sums of squares. We show that values on complete intersections can be also used to certify that a nonnegative form has no sum of squares multipliers $q$ of some degree, such that $pq$ is a sum of squares. 

%We note that for any $p \in \h$ we have $p^2 \in \Sigma_{n,4d}$, and therefore any nonnegative form of degree $2d$ triviallyhas a nonnegative multiplier of degree $2d$ such that the product is a sums of squares. The interest is in establishing or disproving existence of nonnegative multipliers of strictly smaller degree. 

%\subsection{From PSD paper}
%We first look at general real zero-dimensional intersections. 

Let $S=\{v_1,\dots,v_m\}$ be a finite subset of $\RR^n$. Let $E_S$ be the evaluation map that sends $p \in \rx$  to its values on the points $v_i$:
$$E_S:\rx \longrightarrow \rmm, \hspace{8mm} E_S(p)=(p(v_1),\dots,p(v_m)).$$

%Note that $E$ is defined on forms of degree $2d$. 

Let $P_{2d}(S)$ and $\Sigma_{2d}(S)$ be the images of $\p$ and $\sq$ under $E_S$ respectively, and let $H_{d}(S)$ be the image of $\hd$. %We observe that $H$ does not have to be all of $\RR^m$, since the values of forms in $\h$ on points $v_i$ may be linearly dependent. Since we are evaluating nonnegative forms it follows that $P'_{2d}$ lies inside the intersection of $H$ and $\RR^m_+$: $$P' \subseteq H' \cap \RR^m_+.$$
We first show that the cone $\Sigma_{2d}(S)$ is always closed.
\begin{lemma}\label{LEMMA SOS is Closed}
$\Sigma_{2d}(S)$ is a closed convex cone for all $d$ and all finite subsets $S$ of $\RR^n$.
\end{lemma}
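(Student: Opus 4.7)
The plan is to realize $\Sigma_{2d}(S)$ as the set of coordinate-wise sums of squares of vectors in the finite-dimensional subspace $H_d(S) \subset \RR^m$, and then use a standard compactness argument together with Carath\'eodory's theorem to control the number of summands.

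First I would observe that for any $q \in \hd$, the evaluation $E_S(q^2)$ is exactly the Hadamard (coordinate-wise) square of the vector $w = E_S(q) \in H_d(S)$. Consequently,
$$\Sigma_{2d}(S) = \Bigl\{\sum_{i=1}^N w_i \odot w_i \st N \in \mathbb{N},\ w_i \in H_d(S)\Bigr\},$$
where $\odot$ denotes coordinate-wise multiplication. Convexity and the cone property are obvious from this description. The content is closedness.

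Next I would invoke Carath\'eodory's theorem for convex cones: since $\Sigma_{2d}(S)$ sits in $\RR^m$ and is generated as a cone by vectors of the form $w \odot w$ with $w \in H_d(S)$, every element admits a representation using at most $m$ such generators. So I can fix $N = m$ and write every $x \in \Sigma_{2d}(S)$ as $x = \sum_{i=1}^m w_i \odot w_i$ with $w_i \in H_d(S)$.

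Now suppose $x^{(n)} \in \Sigma_{2d}(S)$ and $x^{(n)} \to x$ in $\RR^m$. Write each $x^{(n)} = \sum_{i=1}^m w_i^{(n)} \odot w_i^{(n)}$ with $w_i^{(n)} \in H_d(S)$. For every coordinate $j$ we have $(w_i^{(n)}(j))^2 \leq x_j^{(n)}$, and since $x_j^{(n)}$ is a convergent (hence bounded) sequence of nonnegative reals, each $w_i^{(n)}$ is bounded in $\RR^m$. Passing to a subsequence, $w_i^{(n)} \to w_i$ for each $i$; since $H_d(S)$ is a linear subspace of $\RR^m$, hence closed, we have $w_i \in H_d(S)$. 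By continuity of Hadamard multiplication, $x = \sum_{i=1}^m w_i \odot w_i \in \Sigma_{2d}(S)$.

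The only step that requires care is bounding the number of squares, since without that bound the sequence of representations could have an unbounded number of terms and the componentwise estimate would fail to yield a convergent subsequence. Carath\'eodory's theorem gives precisely the uniform bound we need, and after that the argument is a routine compactness extraction together with the closedness of the finite-dimensional subspace $H_d(S)$.
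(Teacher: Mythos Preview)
Your proof is correct. The paper's argument rests on the same observation---that $\Sigma_{2d}(S)$ consists of sums of coordinate-wise squares of vectors in the subspace $H_d(S)$---but packages the closedness step differently: it takes the unit sphere $\mathbb{S}_d$ in $H_d(S)$, notes that the set $C$ of coordinate-wise squares of its points is compact, and observes that $\Sigma_{2d}(S)$ is the conical hull of the compact convex set $K=\operatorname{conv}(C)$, which does not contain the origin (every point of $K$ has nonnegative coordinates summing to $1$). Closedness of the conical hull of a compact convex set avoiding the origin is a standard fact. Your Carath\'eodory-plus-sequential-compactness argument is in effect a direct proof of that fact in this particular instance; the two routes are minor variants of one another, with the paper's version slightly more conceptual and yours slightly more self-contained.
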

\begin{proof}
%Use the convexity lemma. 
Consider $H_d(S)$, the image of $\hd$ under the evaluation projection $E(S)$. Let $\mathbb{S}_d$ be the unit sphere in $H_d(S)$ with respect to the standard inner product on $\RR^m$. Let $C$ be the points in $\RR^m$ that are pointwise squares of points in $\mathbb{S}_d$:
$$C:=\{(x_1,\dots,x_m) \in \rmm \,\, \mid\,\, (x_1,\dots,x_m)=(v_1^2,\dots,v_m^2) \hspace{3mm} \text{for some} \hspace{3mm} (v_1,\dots,v_m) \in \mathbb{S}_d\}.$$
We see that $C$ is a compact subset of $\rmm$. Let $K$ be the convex hull of $C$. It follows that $K$ is a compact convex set and $0 \notin K$. We now observe that $\Sigma_{2d}(S)$ is the conical hull of $K$. Therefore $\Sigma_{2d}(S)$ is a closed convex cone.
\end{proof}

We now restrict ourselves to finite subsets of $\RR^n$ coming as affine representatives of a complete intersection. Let $\Gamma=\{\gamma_1,\dots,\gamma_m\}$ be a fully real transverse intersection of $n-1$ forms from $\hd$ in $m=d^{n-1}$ points in $\RR\mathbb{P}^{n-1}$. Let $S=\{v_1,\ldots,v_m\} \subset \rn$ be a set of affine representatives for $\gamma_i$. The following theorem is a special case of Theorem 3.1 in \cite{GB2012}.
\begin{theorem}\label{THM Values of Nonnegative Forms}
Let $\RR^m_{++}$ be the positive orthant of $\RR^m$. The intersection of $H_{2d}(S)$ with the positive orthant is contained in $P_{2d}(S)$: $$H_{2d}(S) \cap \RR^m_{++} \subset P_{2d}(S).$$
\end{theorem}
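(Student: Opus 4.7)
The plan is to argue by Hahn--Banach separation combined with the moment representation of elements of $P_{n,2d}^*$, using the fully real hypothesis on $\Gamma$ in a crucial way. Given $w \in H_{2d}(S) \cap \RR^m_{++}$, write $w = E_S(p)$ for some $p \in \rx_{n,2d}$. The conclusion $w \in P_{2d}(S)$ is equivalent to the affine subspace $p + I(\Gamma)_{2d}$ (all forms of degree $2d$ agreeing with $p$ on $\Gamma$) meeting the closed convex cone $P_{n,2d}$. Arguing by contradiction, if these two sets were disjoint, finite-dimensional separation would produce a nonzero linear functional $\ell \in \rx_{n,2d}^*$ with $\ell \geq 0$ on $P_{n,2d}$ and $\ell \leq 0$ on $p + I(\Gamma)_{2d}$. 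Since $I(\Gamma)_{2d}$ is a linear subspace on which $\ell$ is bounded, $\ell$ must vanish identically on $I(\Gamma)_{2d}$; this leaves $\ell \in P_{n,2d}^*$ with $\ell(p) \leq 0$ and $\ell \neq 0$.

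Next I would invoke the structure of $P_{n,2d}^*$: its extreme rays are the point evaluations on $\RR^n$, and the moment cone on the compact unit sphere is closed, so every element of $P_{n,2d}^*$ admits a representation $\ell(f) = \int_{\RR^n} f\,d\nu$ for some nonzero finite nonnegative Borel measure $\nu$. The generators $q_1, \ldots, q_{n-1} \in \rx_{n,d}$ of $I(\Gamma)$ give nonnegative forms $q_i^2 \in I(\Gamma)_{2d}$, and the vanishing $\int q_i^2\,d\nu = 0$, together with $q_i^2 \geq 0$ and $\nu \geq 0$, forces $\mathrm{supp}(\nu)$ into the zero locus of each $q_i$. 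Intersecting, $\mathrm{supp}(\nu)$ lies in the real vanishing locus of $I(\Gamma)$, which by the fully real hypothesis is exactly the union of the rays through $v_1, \ldots, v_m$. Because $2d$ is even and $f$ is homogeneous of degree $2d$, integration along each such ray collapses to a nonnegative multiple of the evaluation at the affine representative, giving $\ell = \sum_i \alpha_i \ell_{v_i}$ with all $\alpha_i \geq 0$. Since $\ell \neq 0$, some $\alpha_{i_0} > 0$, whence $\ell(p) = \sum_i \alpha_i w_i \geq \alpha_{i_0} w_{i_0} > 0$, contradicting $\ell(p) \leq 0$.

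The main substantive input is the moment representation of elements of $P_{n,2d}^*$: one needs that every such $\ell$ is genuinely an integral against a nonnegative measure on $\RR^n$, not merely a limit of conic combinations of point evaluations. This follows from closedness of the moment cone on the compact sphere, and it is the analytic ingredient that does the real work; everything else (the separation step, the use of the squares $q_i^2$ as nonnegative probes in $I(\Gamma)_{2d}$, and the ray-collapse via evenness of $2d$) is routine once the moment representation is in hand. The fully real hypothesis enters precisely at the step identifying $\mathrm{supp}(\nu)$ with a subset of $\{v_1, \ldots, v_m\}$; without it the $q_i$ would have complex zeroes and the support of $\nu$ could avoid the real points of $\Gamma$ entirely, and the whole argument would collapse.
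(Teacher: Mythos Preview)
The paper does not supply its own proof of this statement: it simply records the theorem as a special case of Theorem~3.1 in \cite{GB2012} and moves on, so there is no in-paper argument to compare against.

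Your argument is correct and self-contained, and it is in the same spirit as the duality arguments used elsewhere in the paper. Two small streamlinings are available. First, you do not need the measure representation at all: the paper already notes that the extreme rays of $P_{n,2d}^*$ are precisely the point evaluations, and since $P_{n,2d}^*$ is a closed pointed cone in finite dimensions, Minkowski--Carath\'eodory gives $\ell$ directly as a \emph{finite} nonnegative combination $\ell=\sum_j\beta_j\ell_{u_j}$ with $u_j\in\RR^n\setminus\{0\}$, bypassing any discussion of supports or closedness of moment cones on the sphere. Second, you do not need that the $q_i$ generate $I(\Gamma)$, only that $q_i\in I(\Gamma)_d$ (obvious) and that their common real zero locus in $\PP^{n-1}$ is exactly $\Gamma$ (this is where the fully real, transverse hypothesis enters). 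The separation step is fine as stated: $P_{n,2d}$ is full-dimensional and $p+I(\Gamma)_{2d}$ is disjoint from it, so proper separation yields a nonzero $\ell$; boundedness on the affine subspace forces $\ell$ to vanish on $I(\Gamma)_{2d}$, and the cone structure forces $\ell\ge 0$ on $P_{n,2d}$ and $\ell(p)\le 0$. Feeding $q_i^2$ into $\ell$ pins each $u_j$ to a scalar multiple of some $v_i$, whence $\ell(p)=\sum_i\alpha_i w_i>0$, the desired contradiction.
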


We now show how to use values of forms on a complete intersection to show existence of nonnegative forms that have no sum of squares multipliers of certain degree.  

\begin{theorem}\label{THM Multipliers}
Let $\Gamma$ be a generic completely real transverse intersection of $n-1$ forms from $\hd$ %$g_1,\dots,g_{n-1} \in \hd$ 
in $d^{n-1}$ points in $\RR\mathbb{P}^{n-1}$. 
The genericity assumption on $\Gamma$ is that for all $t \leq 2d$, any collection of at most $\Ind_{t}(\Gamma)$ points of $\Gamma$ imposes linearly independent conditions on forms in $\rx_{n,t}$. %Let $\Gamma'$ be the subvariety of real points of $\Gamma$ and 
Suppose that  $$\Ind_{2d}(\Gamma)\geq \Ind_{d+k}(\Gamma)+\Ind_{k}(\Gamma).$$ Then there exists a nonnegative form $p \in P_{n,2d}$ such that $pq$ is not a sum of squares for any sum of squares $q \in \Sigma_{n,2k}$.
\end{theorem}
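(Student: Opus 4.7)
The overall strategy is to construct $p \in P_{n,2d}$ by prescribing its values on the points of $\Gamma$, using Theorem \ref{THM Values of Nonnegative Forms} to guarantee nonnegativity, and then use a parameter count powered by $\Ind_{2d}(\Gamma) \geq \Ind_{d+k}(\Gamma) + \Ind_k(\Gamma)$ to rule out any sum of squares multiplier making $pq$ a sum of squares.

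First I would reduce to a statement about values on $\Gamma$. Let $S=\{v_1,\dots,v_m\}$ be affine representatives of $\Gamma$, so $m=d^{n-1}$. The pointwise identity $E_S(pq) = E_S(p) \cdot E_S(q)$ in $\RR^m$ shows that if $pq \in \Sigma_{n,2(d+k)}$ then $\omega \cdot E_S(q) \in \Sigma_{2(d+k)}(S)$, where $\omega=E_S(p)$. So it suffices to produce $\omega \in H_{2d}(S) \cap \RR^m_{++}$ such that $\omega \cdot u \notin \Sigma_{2(d+k)}(S)$ for every nonzero $u \in \Sigma_{2k}(S)$: by Theorem \ref{THM Values of Nonnegative Forms} any such $\omega$ lifts to some $p \in P_{n,2d}$, and the genericity of $\Gamma$ at degree $k$ (forcing the evaluation map $\rx_{n,k} \to H_k(S)$ to be injective in the relevant range) guarantees $E_S(q) \neq 0$ whenever $q \in \Sigma_{n,2k}$ is nonzero, since each squared summand of such a $q$ would otherwise vanish on all of $\Gamma$.

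Next I would bound the ``bad set'' $T \subset H_{2d}(S) \cap \RR^m_{++}$ of $\omega$'s failing the desired property. A witness of badness furnishes representations $u = \sum_j \nu_j^2$ with $\nu_j \in H_k(S)$ and $\omega \cdot u = \sum_i \eta_i^2$ with $\eta_i \in H_{d+k}(S)$. The extreme rays of $\Sigma_{2k}(S)$ and $\Sigma_{2(d+k)}(S)$ are exactly the pointwise squares $\{\nu^2 : \nu \in H_k(S)\}$ and $\{\eta^2 : \eta \in H_{d+k}(S)\}$. Exploiting this structure together with the Cayley--Bacharach genericity of $\Gamma$, I would argue that the closure of $T$ is contained in the image of the rational map
$$(\eta,\nu) \longmapsto \eta^2 / \nu^2$$
from $H_{d+k}(S) \oplus H_k(S)$ to $\RR^m$, and the joint scaling $(\eta,\nu) \mapsto (\lambda\eta,\lambda\nu)$ collapses one further dimension, yielding $\dim \overline{T} \leq \Ind_{d+k}(\Gamma) + \Ind_k(\Gamma) - 1$.

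The hypothesis then gives $\dim \overline{T} < \Ind_{2d}(\Gamma) = \dim H_{2d}(S)$, so $T$ does not cover the full open set $H_{2d}(S) \cap \RR^m_{++}$; picking any $\omega$ in the complement and lifting via Theorem \ref{THM Values of Nonnegative Forms} produces the desired $p$. The main obstacle is the ``rank-one reduction'' in the middle paragraph: naively parametrizing $u$ and $\omega \cdot u$ as sums of squares with many summands grows the parameter count linearly and breaks the dimension bound, so the reduction must genuinely exploit the Veronese/squaring structure of the extreme rays of $\Sigma_{2k}(S)$ and $\Sigma_{2(d+k)}(S)$, combined with the Cayley--Bacharach dimension counting underlying the paper's Master Theorem.
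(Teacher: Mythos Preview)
Your proposal has a genuine gap at the ``rank-one reduction'' step. The assertion that $\overline{T}$ is contained in the image of $(\eta,\nu)\mapsto\eta^2/\nu^2$ says precisely that every bad $\omega$ admits a \emph{rank-one} witness: a single square $u=\nu^2$ with $\omega\cdot u=\eta^2$ a single square. Nothing in the hypotheses, the Cayley--Bacharach genericity, or the extreme-ray structure of $\Sigma_{2k}(S)$ forces this. A bad $\omega$ comes with $u=\sum_j\nu_j^2$ and $\omega\cdot u=\sum_i\eta_i^2$, and there is no mechanism to peel off a single summand on each side while preserving the relation: in general $\omega\cdot\nu_j^2$ need not lie in $\Sigma_{2(d+k)}(S)$ at all. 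You correctly flag this as the obstacle, but the appeal to ``Veronese/squaring structure'' and the Master Theorem is not an argument, and without it your dimension bound collapses---as you yourself note, allowing multiple summands blows up the parameter space.

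The paper avoids this entirely and does not bound $\dim T$. It argues by contradiction: if every $\omega\in H_{2d}(S)\cap\RR^m_{++}$ had a nonzero multiplier $y\in\Sigma_{2k}(S)$ with $\omega\cdot y\in\Sigma_{2(d+k)}(S)$, then by normalizing $y$ to the unit sphere and passing to limits (using that both cones are closed, Lemma~\ref{LEMMA SOS is Closed}), the same would hold for every $\omega$ in the closed polyhedral cone $H_{2d}(S)\cap\RR^m_{+}$. Now choose a specific $u$ in the relative interior of a face of this cone of codimension $\Ind_{d+k}(\Gamma)$: then $u$ vanishes at some set $B\subset\Gamma$ of $\Ind_{d+k}(\Gamma)$ points and is nonzero at at least $\Ind_{2d}(\Gamma)-\Ind_{d+k}(\Gamma)\ge\Ind_k(\Gamma)$ other points. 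By genericity at degree $d+k$, any $f\in\rx_{n,d+k}$ vanishing on $B$ vanishes on all of $\Gamma$, so each summand of $u\cdot y=\sum\eta_i^2$ must vanish on $\Gamma$, forcing $u\cdot y=0$. Hence $y$ vanishes at the $\ge\Ind_k(\Gamma)$ points where $u$ is nonzero; genericity at degree $k$ then forces each $r_i$ in $y=E_S(\sum r_i^2)$ to vanish on all of $\Gamma$, and since $k<d$ this gives $r_i=0$, a contradiction. The inequality $\Ind_{2d}(\Gamma)\ge\Ind_{d+k}(\Gamma)+\Ind_k(\Gamma)$ is used exactly to guarantee enough nonzero coordinates of $u$ to kill $y$.
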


\begin{proof}
%Let $s=\Ind_{d+k}(\Gamma)$ and $t=\Ind_{2d}(\Gamma)$. Relabeling if necessary, let $A=\{v_1,\dots,v_s\}$ be a subset of $\Gamma$ imposing all of the linearly independent conditions on forms of degree $d+k$. It follows that if a form $f \in \rx_{n,d+k}$ vanishes on $A$, then it vanishes on all of $\Gamma$.

%We can extend $A$ to form a basis of linear conditions on forms of degree $2d$. Let $B=\{v_1,\dots,v_t\}$ be a subset of $\Gamma$ imposing all of the linearly independent conditions on forms of degree $2d$. Then there exists $x=(x_1,\dots,x_m) \in H$ such that....

Suppose not and for every nonnegative form $p \in P_{n,2d}$ there exists $q \in P_{n,2k}$ such that $pq$ is a sum of squares. Let $S \subset \RR^n$ be the set of affine representatives of $\Gamma$. %We first observe that $P'_{2k}$ is a closed convex cone, since the kernel of the evaluation projection $E_{2k}$ does not intersect the cone $P_{n,2k}$. Suppose not and there exists $f \in P_{n,2k}$ such that $E_{2k}(f)=0$. Then, since $f$ is nonnegative, it double vanishes on all of $\Gamma$. However double vanishing on $\Gamma$ is a complete intersection of forms $g_1^2,\dots,g_{n-1}^2$, and since $2k < 2d$ we see that $f$ cannot double vanish on $\Gamma$. 
From Lemma \ref{LEMMA SOS is Closed} we know that $\Sigma_{2k}(S)$ and $\Sigma_{2d+2k}(S)$ are closed convex cones.

For $x,y \in \RR^m$ let $x \cdot y$ denote pointwise multiplication of $x$ and $y$: $x \cdot y=(x_1y_1,\dots,x_my_m)$. Let $\RR^m_+$ be the nonnegative orthant of $\RR^m$. We claim that for any $x \in H_{2d}(S) \cap \RR^m_+$ there exists $y \in \Sigma_{2k}(S)$ such that $x\cdot y \in \Sigma_{2d+2k}(S)$. By our assumption, and Theorem \ref{THM Values of Nonnegative Forms} we know that this holds for all $x \in H_{2d}(S) \cap \RR^m_{++}$.

Let $x \in H_{2d}(S) \cap \RR^m_+$ and let $x_1,x_2, \dots$ be a sequence of points in $H_{2d}(S) \cap \RR^m_{++}$ approaching $x$. Let $y_1,y_2, \dots$ be a sequence of multipliers in $\Sigma_{2k}(S)$ such that $x_i \cdot y_i\in \Sigma_{2d+2k}(S)$. Since the condition of belonging to $\Sigma_{2d+2k}(S)$ is conical, it follows that we may choose $y_i$ lying on the unit sphere in $\RR^m$. Let $y$ be an accumulation point of $y_i$. Since the cone $\Sigma_{2k}(S)$ is closed, it follows that $y \in \Sigma_{2k}(S)$ and since $\Sigma_{2d+2k}(S)$ is closed we see that $x \cdot y \in \Sigma_{2d+2k}(S)$.

We now examine $T=H_{2d}(S) \cap \RR^m_{+}$. $T$ is a polyhedral cone and we have $\dim H_{2d}(S)=\Ind_{2d}(\Gamma)$. Let $F$ be a face of $T$ of codimension $\Ind_{d+k}(\Gamma)$. Then $\Ind_{d+k}(\Gamma)$ linearly independent defining inequalities of $T$ are tight on $F$. Therefore for any $u$ in the relative interior of $F$, the coordinates of $u$ are zero in at least $\Ind_{d+k}(\Gamma)$ entries corresponding to the tight inequalities on $F$, and are non-zero in at least $\Ind_{2d}(\Gamma)-\Ind_{d+k}(\Gamma)$ entries corresponding to the points where evaluation is linearly independent from the zeroes of $u$. By relabeling, if necessary, let $B=\{v_1,\dots,v_{\Ind_{d+k}}\}$ be the collection points corresponding to the tight  linearly independent inequalities of $u$. 

We know from above that there exists $y \in \Sigma_{2k}(S)$ such that $u\cdot y \in \Sigma_{2d+2k}(S)$. By genericity of $\Gamma$ we know that any $f\in \rx_{n,d+k}$ that vanishes on $B$ must vanish on all of $\Gamma$. Therefore we see that $u \cdot y=0$. Therefore $y$ comes from a form $r \in \Sigma_{n,2k}$ that vanishes on the $\Ind_{2d}(\Gamma)-\Ind_{d+k}(\Gamma)$ points in $\Gamma$. Let $r=\sum r_i^2$. Then each $r_i \in \rx_{n,k}$ must vanish on $\Gamma$. Since we have $k<d$, it follows that all $r_i$ are identically zero, which is a contradiction.

%Therefore there exists a vertex $u$ of $T$ where $\Ind_{2d}(\Gamma)-1$ linearly independent defining inequalities of $T$ are tight. In other words, the exists $u \in T$ such that $\Ind_{2d}(\Gamma)-1$ coordinates of $u$ are zero, and the points $v_i$ corresponding to these coordinates impose linearly independent conditions on forms of degree $2d$. Since we have $\Ind_{2d}(\Gamma)> \Ind_{d+k}(\Gamma)$, we may select a subset $B$  of $v_i$ of size $\Ind_{d+k}(\Gamma)$ such that $B$ forms a basis of conditions imposed by $\Gamma$ on forms of degree $d+k$. It follows that if a form $f \in \rx_{n,d+k}$ vanishes on $B$, then it vanishes on all of $\Gamma$.

\end{proof}

We now use Theorem \ref{THM Multipliers} to establish Corollary \ref{COR Bounds}.

\begin{proof}[Proof of Corollary \ref{COR Bounds}] First let $\Gamma$ be a fully real generic intersection of two curves from $\rx_{3,2k+3}$. Then $\Ind_{k}(\Gamma)=\dim \rx_{3,k}=\binom{k+2}{2}$ and $\Ind_{3k+3}(\Gamma)=\dim \rx_{3,3k+3}-2\dim \rx_{3,k}=\binom{3k+5}{2}-2\binom{k+2}{2}$. Thus $\Ind_{k}(\Gamma)+\Ind_{3k+3}(\Gamma)=4k^2+12k+9=(2k+3)^2$. Also, $\Ind_{4k+6}(\Gamma)=\dim \rx_{3,4k+6}-2\dim \rx_{3,2k+3}+1=\binom{4k+8}{2}-2\binom{2k+5}{2}+1=(2k+3)^2$. Therefore we can apply Theorem \ref{THM Multipliers} with $d=2k+3$.

Next let $\Gamma$ be a fully real generic intersection of three cubics from $\rx_{4,3}$. Then $\Ind_1(\Gamma)=4$ and $\Ind_4(\Gamma)=\dim \rx_{4,4}-3\cdot4=23$. Also $\Ind_{6}(\Gamma)=\dim \rx_{4,6}-3\dim \rx_{4,3}+\binom{3}{2}=27$. Thus we can apply Theorem \ref{THM Multipliers} with $d=3$ and $k=1$.

Finally let $\Gamma$ be a fully real generic intersection of six quadrics from $\rx_{7,2}$. Then $\Ind_{1}(\Gamma)=7$ and $\Ind_3(\Gamma)=\dim \rx_{7,3}-6\cdot7=42$. Also $\Ind_{4}(\Gamma)=\dim \rx_{7,4}-6\dim \rx_{7,2}+\binom{6}{2}=57$. Thus we can apply Theorem \ref{THM Multipliers} with $d=2$ and $k=1$.

\end{proof}

\end{document}